\documentclass{amsart}
\usepackage{latexsym, url}
\usepackage{hyperref}
\usepackage[utf8]{inputenc}
\usepackage{amsthm}
\usepackage{amsmath}
\usepackage{amsfonts}
\usepackage{amssymb}
\usepackage[final]{showkeys}    
\usepackage[dvips]{graphicx}
\usepackage{xypic}
\addtolength\textwidth{1 in}
\addtolength\hoffset{-.5 in}
\thispagestyle{plain}

\input xy
\xyoption{all}

\newtheorem{theo}{Theorem}[section]
\newtheorem{thm}[theo]{Theorem}

\newtheorem{lem}[theo]{Lemma}
\newtheorem{fact}[theo]{Fact}

\newtheorem{defin}[theo]{Definition}

\newtheorem{remark}[theo]{Remark}

\newtheorem{example}[theo]{Example}

\newtheorem{question}[theo]{Question}

\newcommand\Mod{\operatorname{Mod}}
\newcommand\Emb{\operatorname{Emb}}

\newcommand\op{\operatorname{op}}
\newcommand\id{\operatorname{id}}

\newcommand\Set{\operatorname{\bf Set}}

\newcommand\Str{\operatorname{\bf Str}}

\newcommand\ca{\mathcal {A}}

\newcommand\ci{\mathcal {I}}

\newcommand\ck{\mathcal {K}}
\newcommand\cl{\mathcal {L}}

\newcommand{\ran}{\operatorname{ran}}

\newcommand{\K}{\mathbf{K}}
\newcommand{\leap}[1]{\le_{#1}}
\newcommand{\lea}{\leap{\K}}
\newcommand{\ccl}{\mathbf{cl}}
\newcommand{\cclp}[1]{\ccl_{#1}}

\newcommand{\ba}{\bar{a}}
\newcommand{\bb}{\bar{b}}

\newcommand{\bx}{\bar{x}}
\newcommand{\by}{\bar{y}}

\newcommand{\fct}[2]{{}^{#1} #2}
\newcommand{\rest}{\upharpoonright}
\newcommand{\bigK}{\widehat{\K}}
\newcommand{\bigM}{\widehat{M}}

\newcommand{\bigtau}{\widehat{\tau}}
\newcommand{\LS}{\operatorname{LS}}
\newcommand{\Ll}{\mathbb{L}}

 \newbox\noforkbox \newdimen\forklinewidth
\forklinewidth=0.3pt \setbox0\hbox{$\textstyle\smile$}
\setbox1\hbox to \wd0{\hfil\vrule width \forklinewidth depth-2pt
 height 10pt \hfil}
\wd1=0 cm \setbox\noforkbox\hbox{\lower 2pt\box1\lower
2pt\box0\relax}


\newcommand{\seq}[1]{\langle #1 \rangle}

\setcounter{tocdepth}{1}

\title[Universal AECs and locally multipresentable categories]
      {Universal abstract elementary classes and locally multipresentable categories}

\author[M. Lieberman]{Michael Lieberman}
\email{lieberman@math.muni.cz}
\urladdr{http://www.math.muni.cz/\textasciitilde lieberman/}
\address{Department of Mathematics and Statistics, Faculty of Science, Masaryk University, Brno, Czech Republic}

\author[J. Rosick\'y]{Ji\v r\'i Rosick\'y}
\email{rosicky@math.muni.cz}
\urladdr{http://www.math.muni.cz/\textasciitilde rosicky/}
\address{Department of Mathematics and Statistics, Faculty of Science, Masaryk University, Brno, Czech Republic}
\thanks{The first and second authors are supported by the Grant Agency of the Czech Republic under the grant P201/12/G028.}

\author[S. Vasey]{Sebastien Vasey}
\email{sebv@math.harvard.edu}
\urladdr{http://math.harvard.edu/\textasciitilde sebv/}
\address{Department of Mathematics \\ Harvard University \\ Cambridge, Massachusetts, USA}

\date{\today\\
AMS 2010 Subject Classification: Primary 03C48. Secondary: 18C35, 03C52, 03C55, 03C75.
}
\keywords{abstract elementary classes, accessible categories, universal classes, intersections, locally multipresentable categories, locally polypresentable categories}

\begin{document}

\begin{abstract}
  We exhibit an equivalence between the model-theoretic framework of universal classes and the category-theoretic framework of locally multipresentable categories. We similarly give an equivalence between abstract elementary classes (AECs) admitting intersections and locally polypresentable categories. We use these results to shed light on Shelah's presentation theorem for AECs.
\end{abstract} 

\maketitle

\tableofcontents

\section{Introduction}

Abstract elementary classes (AECs) \cite{sh88, shelahaecbook} were introduced by Shelah in the late seventies as a purely semantic framework in which to study the model theory of infinitary logics such as $\Ll_{\infty, \omega}$. At about the same time, Lair \cite{lair-accessible} introduced the notion of an accessible category (which he called a ``catégorie modelable'') and proved that accessible categories are exactly those that can be sketched. Independently, the second author showed in his doctoral thesis \cite{rosicky-abstract,rosicky-thesis} that accessible categories correspond to classes of models of an $\Ll_{\infty, \infty}$-sentence, thus exhibiting an explicit connection with model theory. Later Makkai and Paré \cite{makkai-pare} independently rediscovered these results and showed that they say essentially the same thing in different languages. The connection between AECs and accessible categories was then studied more closely by the first author \cite{lieberman-categ} and independently by Beke and the second author \cite{beke-rosicky}. These works characterized AECs as special kinds of accessible categories with all directed colimits and whose morphisms are monomorphisms.

In recent joint work with Boney and Grossberg \cite{mu-aec-jpaa} the authors introduced $\mu$-AECs, a generalization of AECs where only closure under $\mu$-directed colimits is required. It was shown that $\mu$-AECs correspond exactly to accessible categories whose morphisms are monomorphisms (the correspondence is given in terms of an equivalence of categories: see Fact \ref{mu-aec-acc} for a more precise statement). This gives evidence that these frameworks are natural.

One of the questions that motivated Shelah's introduction of AECs is the eventual categoricity conjecture: an AEC categorical in a single high-enough cardinal (i.e.\ with a unique model of that cardinality, up to isomorphism) should be categorical in \emph{all} high-enough cardinals. This conjecture has profoundly shaped the development of the field (see the introduction of \cite{ap-universal-apal} for a survey and history of the conjecture). Note that Shelah's eventual categoricity conjecture can be rendered as a purely category-theoretic statement, with cardinalities replaced by presentability ranks---see \cite[\S6]{beke-rosicky})---and hence can be posed in relation to general accessible categories. 

Recently, the third author \cite{ap-universal-apal, categ-universal-2-selecta} proved that the conjecture holds in the more restricted framework of universal classes: classes of structures closed under isomorphism, substructures, and union of chains. Universal classes had previously been studied by, among others, Tarski \cite{tarski-th-models-i} and Shelah \cite{sh300-orig}.

In this paper, we show (Theorem \ref{char}) that universal classes have a natural category-theoretic analog: the locally $\aleph_0$-multipresentable categories (whose morphisms are monomorphisms) introduced by Diers \cite{diers}. They can be characterized as the $\aleph_0$-accessible categories that have all connected limits, i.e. those generated from equalizers and wide pullbacks (see Fact \ref{loc-limit-charact}). More generally, we prove that $\mu$-universal classes (where we allow the relations and functions to have arity strictly less than $\mu$) correspond to locally $\mu$-multipresentable categories.

After the initial submission of this paper, Hyttinen and Kangas \cite{hyttinen-kangas-universal-v2} have continued the work of the third author by proving that, in a technical model-theoretic sense, the big-enough models in an eventually categorical universal classes look like either vector spaces or sets. In light of the present paper, it would be interesting to know whether their result has a category-theoretic analog.

Another type of AEC that was studied in the third author's proof of the categoricity conjecture for universal classes are those which admit intersections. They were introduced by Baldwin and Shelah \cite[1.2]{non-locality} and can be characterized as the AECs admitting a certain closure operator. Any universal class admits intersections and the third author has shown \cite[5.27]{ap-universal-apal} that the categoricity conjecture holds in AECs admitting intersections if we assume a large cardinal axiom. We suggest that such AECs are also natural by characterizing them (Theorem \ref{admit-inter-charact}) as the locally $\aleph_0$-polypresentable categories of Lamarche \cite{lamarche-thesis}. These are the $\aleph_0$-accessible categories with wide pullbacks (see Fact \ref{loc-limit-charact}).  Indeed, this result generalizes as well: per Theorem~\ref{admit-inter-charact}, any $\mu$-AEC admitting intersections is a locally $\mu$-polypresentable category. Examples of locally multi- and polypresentable categories arise naturally in areas such as functional analysis, algebra, and even computer science, see Example \ref{categ-examples}. 

In the final section of this paper, we present a generalization of Shelah's presentation theorem \cite[I.1.9]{shelahaecbook} to all accessible categories with arbitrary $\mu$-directed colimits whose morphisms are monomorphisms: we show (Theorem \ref{pres-thm}) that any such category is the essential image by a faithful functor of a $\mu$-universal class. This generalizes the category-theoretic presentation theorem of the first and second authors \cite[2.5]{ct-accessible-jsl} as well as Boney's presentation theorem for metric AECs \cite[6.3]{boney-pres-metric-mlq}.  (Recall that metric AECs are an analogue of AECs in which the structures have underlying complete metric spaces, rather than sets: see \cite[2.7]{shelus} or \cite[2.1]{hirvhyt}.) We give one more characterization of the $\mu$-AECs admitting intersections as those classes for which the functor is, in a sense, close to being full (see Definition \ref{weakly-full-def} and Theorem \ref{pres-thm-char}).

Throughout this paper, we assume the reader is familiar with basic category theory as presented e.g.\ in \cite{joy-of-cats}. We also assume some familiarity with $\mu$-AECs and their relationship with accessible categories \cite{mu-aec-jpaa}.

We use the following notational conventions: we write $K$ for a class of $\tau$-structures and $\K$ (boldface) for a pair $(K, \lea)$, where $\lea$ is a partial order. We write $\ck$ (script) for a category. We will abuse notation and write $M \in \K$ instead of $M \in K$. For a structure $M$, we write $U M$ for its universe, and $|U M|$ for the cardinality of its universe. We write $M \subseteq N$ to mean that $M$ is a substructure of $N$. For $\alpha$ an ordinal, we let $\fct{<\alpha}{A}$ [resp. $\fct{\alpha}{A}$] denote the set of sequences of length less than [resp. exactly] $\alpha$ with elements from the set $A$. We will abuse this notation as well, writing $\fct{<\alpha}{M}$ in place of $\fct{<\alpha}{U M}$.

The authors would like to thank Will Boney for comments that helped improve the presentation of this paper.

\section{Universal classes or classes with intersections}

The following concept was introduced by Tarski for finite vocabularies \cite{tarski-th-models-i} and by Shelah for infinite (but still finitary) vocabularies \cite{sh300-orig}. In this paper, we study it for potentially infinitary vocabularies:

\begin{defin}
  Let $\mu$ be a regular cardinal. $K$ is a \emph{$\mu$-universal class} if:

  \begin{enumerate}
  \item $K$ is a class of structures in a fixed $\mu$-ary vocabulary $\tau = \tau (K)$.
  \item $K$ is closed under isomorphism, unions of $\mu$-directed systems of $\tau$-substructure inclusions, and $\tau$-substructures.
  \end{enumerate}

  When $\mu = \aleph_0$, we omit it and just say that $K$ is a universal class.
\end{defin}

We say that an $\Ll_{\infty, \infty}$-sentence is \emph{universal} if it has the form $\forall \bx  \phi$, where $\phi$ is quantifier-free. We say that a theory is \emph{universal} if it consists only of universal sentences. Tarski has shown \cite{tarski-th-models-i} that a universal class in a finite (and finitary) vocabulary is the class of models of a universal $\Ll_{\omega, \omega}$-theory. The proof generalizes without difficulties to infinitary logics:

\begin{fact}[Tarski's presentation theorem]\label{tarski-pres}
  Let $\mu$ be a regular cardinal and $K$ be a class of structures in some $\mu$-ary vocabulary $\tau$. The following are equivalent:

  \begin{enumerate}
  \item\label{tarski-1} There is a set $\Gamma$ of quantifier-free $\Ll_{\infty, \mu}$-types such that $K$ is the class of all $\tau$-structures omitting $\Gamma$.
  \item\label{tarski-2} $K$ is the class of models of a universal $\Ll_{\infty, \mu}$ theory.
  \item\label{tarski-3} $K$ is a $\mu$-universal class.
  \end{enumerate}
\end{fact}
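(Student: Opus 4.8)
The plan is to prove the cycle $(\ref{tarski-1}) \Rightarrow (\ref{tarski-2}) \Rightarrow (\ref{tarski-3}) \Rightarrow (\ref{tarski-1})$, with the last implication carrying essentially all of the content. The implication $(\ref{tarski-1}) \Rightarrow (\ref{tarski-2})$ is a direct rewriting: if $p(\bar{x}) = \{\phi_i(\bar{x}) : i \in I\}$ is a quantifier-free type with $|\bar{x}| < \mu$, then a structure $M$ omits $p$ exactly when $M \models \forall \bar{x} \bigvee_{i \in I} \neg \phi_i(\bar{x})$, and since each $\phi_i$ is quantifier-free and $|\bar{x}| < \mu$ this is a universal $\Ll_{\infty, \mu}$-sentence; collecting these over $p \in \Gamma$ yields a universal theory axiomatizing $K$. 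For $(\ref{tarski-2}) \Rightarrow (\ref{tarski-3})$ I would invoke the absoluteness of quantifier-free formulas, namely that for $M \subseteq N$, $\bar{a} \in \fct{<\mu}{M}$, and $\phi$ quantifier-free one has $M \models \phi(\bar{a})$ iff $N \models \phi(\bar{a})$. Closure under isomorphism and substructures is then immediate, and for a $\mu$-directed union $M = \bigcup_i M_i$ and a universal axiom $\forall \bar{x} \phi$, any $\bar{a} \in \fct{<\mu}{M}$ already lies in some $M_i$ by $\mu$-directedness (here regularity of $\mu$ and $|\bar{x}| < \mu$ are used), whence $M_i \models \phi(\bar{a})$ and absoluteness gives $M \models \phi(\bar{a})$.

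The substance lies in $(\ref{tarski-3}) \Rightarrow (\ref{tarski-1})$. The first key fact is that every $\tau$-structure $M$ is the $\mu$-directed union of the substructures $\langle \bar{a}\rangle^M$ generated by tuples $\bar{a} \in \fct{<\mu}{M}$: the family of these substructures is $\mu$-directed precisely because $\mu$ is regular (a union of fewer than $\mu$ generating sets, each of size less than $\mu$, again has size less than $\mu$), and it covers $M$. Pairing closure under substructures (for one direction) with closure under $\mu$-directed unions (for the other), I obtain that $M \in K$ if and only if $\langle \bar{a}\rangle^M \in K$ for every $\bar{a} \in \fct{<\mu}{M}$.

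The second key fact is that whether $\langle \bar{a}\rangle^M \in K$ depends only on the quantifier-free type $\operatorname{qftp}(\bar{a}; M)$: the elements of $\langle \bar{a}\rangle^M$ are the values of terms in $\bar{a}$ (each term built from function symbols of arity less than $\mu$), and equality of such values together with the holding of atomic relations among them is dictated entirely by the quantifier-free type, so two tuples with the same type generate isomorphic substructures, and closure under isomorphism settles membership. I would then set $\Gamma$ to be the set of all $\operatorname{qftp}(\bar{a}; N)$, ranging over $\tau$-structures $N$ and $\bar{a} \in \fct{<\mu}{N}$ with $\langle \bar{a}\rangle^N \notin K$; this is a set, not a proper class, since there are only a set of quantifier-free types in fewer than $\mu$ variables over $\tau$ (one may restrict to complete atomic literal types, which suffice to pin down the generated substructure). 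Unwinding the definitions, $M$ realizes some member of $\Gamma$ iff some $\langle \bar{a}\rangle^M \notin K$ iff $M \notin K$; taking contrapositives, $M$ omits $\Gamma$ iff $M \in K$, as required.

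The main obstacle is the direction $(\ref{tarski-3}) \Rightarrow (\ref{tarski-1})$, and within it the two facts above. I expect the care-requiring points to be exactly where the hypotheses on $\mu$ enter: the regularity of $\mu$ in verifying that the generated substructures form a $\mu$-directed system, and the bound ensuring that $\Gamma$ is a genuine set despite the $\mu$-ary (rather than merely finitary) and infinitary nature of the setting.
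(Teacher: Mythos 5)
Your proof is correct and follows essentially the same route as the paper's (sketched) argument: the easy cycle $(1)\Rightarrow(2)\Rightarrow(3)$, and for $(3)\Rightarrow(1)$ taking $\Gamma$ to be the (literal) types coding the $(<\mu)$-generated structures excluded from $K$, with the $\mu$-directed system of generated substructures doing the work---your $\langle\ba\rangle^N\notin K$ is equivalent, by closure under substructures, to the paper's ``not contained in any member of $K$.'' You supply details the paper leaves implicit (notably that one must restrict to complete atomic types so that $\Gamma$ is a set, since quantifier-free $\Ll_{\infty,\mu}$-formulas form a proper class), which is exactly the right point of care.
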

\begin{proof}[Proof sketch]
  That (\ref{tarski-1}) implies (\ref{tarski-2}) and (\ref{tarski-2}) implies (\ref{tarski-3}) can be easily verified. To see that (\ref{tarski-3}) implies (\ref{tarski-1}), let $K_0$ be the class of $\tau$-structures that are generated by a set of size less than $\mu$ and are not contained in any member of $K$. Let $\Gamma$ be the set of types that code each structure in $K_0$, and use a $\mu$-directed system argument to see that $K$ is the set of $\tau$-structures omitting $\Gamma$.
\end{proof}

Recall from \cite[\S2]{mu-aec-jpaa} that a \emph{($\mu$-ary) abstract class} is a pair $\K = (K, \le)$ such that $K$ is a class of structures is a fixed $\mu$-ary vocabulary $\tau = \tau (\K)$, and $\le$ is a partial order on $K$ that respects isomorphisms and extends the $\tau$-substructure relation. We say that such a $\K$ is a \emph{$\mu$-universal class} if $K$ is a $\mu$-universal class and $\lea = \subseteq$.

In any abstract class $\K$, there is a natural notion of morphism: we say that $f: M \rightarrow N$ is a \emph{$\K$-embedding} if $f$ is an isomorphism from $f$ onto $f[M]$ and $f[M] \lea N$. We can see an abstract class and its $\K$-embeddings as a category. In fact (see \cite[\S2]{mu-aec-jpaa}), an abstract class is a replete and iso-full subcategory of the category of $\tau$-structures with injective homomorphisms.

We now recall the definition of a $\mu$-AEC from \cite[2.2]{mu-aec-jpaa}:

\begin{defin}
  Let $\mu$ be a regular cardinal. An abstract class $\K$ is a \emph{$\mu$-abstract elementary class} (or \emph{$\mu$-AEC} for short) if it satisfies the following three axioms:

  \begin{enumerate}
  \item Coherence: for any $M_0, M_1, M_2 \in \K$, if $M_0 \subseteq M_1 \lea M_2$ and $M_0 \lea M_2$, then $M_0 \lea M_1$.
  \item Chain axioms: if $\seq{M_i : i \in I}$ is a $\mu$-directed system in $\K$, then:
    \begin{enumerate}
    \item $M := \bigcup_{i \in I} M_i$ is in $\K$.
    \item $M_i \lea M$ for all $i \in I$.
    \item If $M_i \lea N$ for all $i \in I$, then $M \lea N$.
    \end{enumerate}
  \item Löwenheim-Skolem-Tarski (LST) axiom: there exists a cardinal $\lambda = \lambda^{<\mu} \ge |\tau (\K)| + \mu$ such that for any $M \in \K$ and any $A \subseteq U M$, there exists $M_0 \in \K$ with $M_0 \lea M$, $A \subseteq U M_0$, and $|U M_0| \le |A|^{<\mu} + \lambda$. We write $\LS (\K)$ for the least such $\lambda$.
  \end{enumerate}
\end{defin}

Note that when $\mu = \aleph_0$, we recover Shelah's definition of an AEC from \cite{sh88}. Note, too, that a $\mu$-universal class, equipped with the $\tau$-substructure relation, is a $\mu$-AEC. In this case, we may call it a \emph{universal $\mu$-AEC}. Equivalently, a universal $\mu$-AEC is a $\mu$-AEC $\K$ such that for any $N \in \K$, $M \subseteq N$ implies that $M \in \K$ and $M \lea N$.

More general than $\mu$-universal classes are $\mu$-AECs admitting intersections. AECs admitting intersections were introduced in \cite[1.2]{non-locality} and further studied in \cite[\S2]{ap-universal-apal}.

\begin{defin}
  A $\mu$-AEC $\K$ \emph{admits intersections} if for any $N \in \K$ and any $A \subseteq U N$, the set

  $$
  \cclp{\K}^N (A) = \ccl^{N} (A) := \bigcap \{M \in \K \mid M \lea N, A \subseteq U M\}
  $$
is the universe of a $\lea$-substructure of $M$. In this case, we abuse notation and write $\ccl^{N} (A)$ for this substructure as well.
\end{defin}

Note that any universal $\mu$-AEC $\K$ admits intersection, since $\ccl^N (A)$ is a substructure of $N$ and $\K$ is closed under substructures. On the other hand, the AEC of algebraically closed fields admits intersections but is not a universal class ($\mathbb{Q}$ is a subfield of an algebraically closed field, but it is not algebraically closed).

Many of the properties of AECs admitting intersections proven in \cite[\S2]{ap-universal-apal} can be generalized to $\mu$-AECs: it suffices to replace $\aleph_0$ by $\mu$ in the proofs. For example:

\begin{fact}\label{cl-props}
  Let $\K$ be a $\mu$-AEC that admits intersection.

  \begin{enumerate}
  \item If $M \lea N$ and $A \subseteq U M$, then $\ccl^{M} (A) = \ccl^{N} (A)$.
  \item Local character: If $B \subseteq \ccl^N (A)$ and $|B| < \mu$, then there exists $A_0 \subseteq A$ such that $|A_0| < \mu$ and $B \subseteq \ccl^N (A_0)$.
  \end{enumerate}
\end{fact}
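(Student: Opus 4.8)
The plan is to derive both statements directly from the intersection definition together with the coherence and chain axioms of a $\mu$-AEC, treating $\mu$ exactly as one would treat $\aleph_0$ in the $\mu = \aleph_0$ case.

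For (1), I would prove the two inclusions separately. For $\ccl^N(A) \subseteq \ccl^M(A)$, I first observe that every $M' \in \K$ with $M' \lea M$ and $A \subseteq U M'$ also satisfies $M' \lea N$ by transitivity of $\lea$ (and still contains $A$), so the family intersected to form $\ccl^N(A)$ contains the family intersected to form $\ccl^M(A)$; intersecting over a larger family yields the smaller set. For the reverse inclusion, the key point is that $\ccl^N(A)$ is itself an admissible member of the family defining $\ccl^M(A)$. Indeed, from the inclusion just proved we have $U(\ccl^N(A)) \subseteq U(\ccl^M(A)) \subseteq U M$, so $\ccl^N(A)$ is a substructure of $N$ whose universe lies in $U M$, hence a substructure of $M$; since moreover $\ccl^N(A) \lea N$ and $M \lea N$, coherence (applied with $M_0 = \ccl^N(A)$, $M_1 = M$, $M_2 = N$) gives $\ccl^N(A) \lea M$. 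As $A \subseteq U(\ccl^N(A))$, this structure appears in the intersection defining $\ccl^M(A)$, whence $\ccl^M(A) \subseteq \ccl^N(A)$.

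For (2), the plan is to realize $\ccl^N(A)$ as a $\mu$-directed union of closures of small subsets. Let $D = \{A' \subseteq A : |A'| < \mu\}$, which is $\mu$-directed under inclusion because $\mu$ is regular (a union of fewer than $\mu$ sets each of size $< \mu$ has size $< \mu$). For $A' \subseteq A''$ in $D$, monotonicity of the intersection operator gives $U(\ccl^N(A')) \subseteq U(\ccl^N(A''))$, and since both are $\lea N$, coherence upgrades this to $\ccl^N(A') \lea \ccl^N(A'')$; thus $\seq{\ccl^N(A') : A' \in D}$ is a $\mu$-directed system in $\K$. By the chain axioms its union $M^*$ lies in $\K$, satisfies $\ccl^N(A') \lea M^*$ for all $A'$, and satisfies $M^* \lea N$ (using that each $\ccl^N(A') \lea N$). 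Since $A \subseteq U M^*$ and $M^* \lea N$, the structure $M^*$ appears in the intersection defining $\ccl^N(A)$, giving $\ccl^N(A) \subseteq M^*$; the reverse inclusion is immediate from monotonicity, so $\ccl^N(A) = \bigcup_{A' \in D} \ccl^N(A')$. Finally, given $B \subseteq \ccl^N(A)$ with $|B| < \mu$, each $b \in B$ lies in some $U(\ccl^N(A_b'))$; setting $A_0 = \bigcup_{b \in B} A_b'$ yields $A_0 \subseteq A$ with $|A_0| < \mu$ (again by regularity) and $B \subseteq U(\ccl^N(A_0))$, as desired.

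The only genuinely delicate points, which I would want to verify carefully, are the repeated passages from universe-containment to the $\lea$-relation. These rest entirely on coherence, and it is worth isolating the small lemma that two substructures $X, Y$ of a common $N \in \K$, both $\lea N$, satisfy $X \lea Y$ whenever $U X \subseteq U Y$. Beyond this the argument is bookkeeping; the role of the regularity of $\mu$ --- ensuring both that $D$ is $\mu$-directed and that the witness $A_0$ is small --- is the one place where the generalization from $\aleph_0$ is used, and it goes through unchanged.
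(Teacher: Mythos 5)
Your proof is correct and is essentially the intended one: the paper states Fact~\ref{cl-props} without proof, deferring to \cite[\S 2]{ap-universal-apal} with the remark that the arguments there generalize by replacing $\aleph_0$ with $\mu$, and your argument is exactly that standard proof in the $\mu$-ary setting --- monotonicity of the intersection plus the coherence axiom for (1), and the $\mu$-directed decomposition $\ccl^N(A)=\bigcup_{A'\subseteq A,\,|A'|<\mu}\ccl^N(A')$ via the chain axioms for (2). You also correctly isolate the two uses of the regularity of $\mu$ and the coherence lemma (substructures $X \subseteq Y$ of a common $N$ with $X \lea N$ and $Y \lea N$ satisfy $X \lea Y$), which is the same device used in the cited source.
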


We will use these facts without much comment in the sequel.

Similarly to \cite[3.1]{ap-universal-apal}, one can define the notion of a pseudo-universal $\mu$-AEC:

\begin{defin}
  A $\mu$-AEC $\K$ is \emph{pseudo-universal} if it admits intersections and whenever $f, g: M \rightarrow N$ are $\K$-embeddings and $A \subseteq U M$ is such that $f \rest A = g \rest A$, then $f \rest \ccl^M (A) = g \rest \ccl^M (A)$.
\end{defin}
\begin{remark}\label{univ-pseudo-univ}
  A universal $\mu$-AEC is pseudo-universal: since $\ccl^{M} (A)$ is just the closure of $A$ under the functions of $M$, any $\K$-embedding with domain $\ccl^M (A)$ is determined by its restriction to $A$.
\end{remark}

Some motivation may be in order: consider the class of groups in the language containing a symbol for multiplication, identity, and inverse. Then this is a universal class. On the other hand, if we look at the class of groups in the language containing only multiplication and identity, then it is not universal: substructures need not be closed under inverses. However it will still be pseudo-universal, since inverses are definable from the rest. In fact, we will see shortly (Theorem \ref{pseudo-univ-equiv}) that this is how all pseudo-universal classes look: any such class admits a functorial expansion to a universal one.

Although we will not need it in this paper, we also note that we can imitate the proof of \cite[3.7]{ap-universal-apal} to show that any pseudo-universal $\mu$-AEC is fully $(<\mu)$-tame and short over $\emptyset$.

We now show that pseudo-universal $\mu$-AECs are the same as universal $\mu$-AECs, up to extension of the vocabulary. Recall from \cite[3.1]{sv-infinitary-stability-afml} the notion of a functorial expansion of an abstract class $\K$: roughly, it is an abstract class $\bigK$ in an expansion of $\tau (\K)$ such that the reduct map is an isomorphism of concrete categories. We show that every pseudo-universal class (that does not contain the empty structure) can be functorially expanded to a universal class. This result is not needed in the rest of this paper, where we deal with equivalence of categories, but shows that every pseudo-universal class is \emph{isomorphic} (not just equivalent) to a universal class.

\begin{thm}\label{pseudo-univ-equiv}
  Let $\K$ be a pseudo-universal $\mu$-AEC that does not contain the empty structure. Then there is a functorial expansion $\bigK$ of $\K$ which is a universal $\mu$-AEC. Moreover, $|\tau (\bigK)| \le 2^{\LS (\K)}$.
\end{thm}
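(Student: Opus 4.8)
The plan is to make the closure operator functional by adjoining, for each isomorphism type of a $(<\mu)$-generated structure together with each of its elements, a new function symbol whose value on a tuple $\bar a$ names the corresponding element of $\ccl^N(\bar a)$; the pseudo-universality hypothesis is exactly what guarantees that these symbols are well defined. First I would record the two facts that drive everything. Since a $\K$-embedding $f\colon M\to N$ is an isomorphism onto a $\lea$-substructure, Fact \ref{cl-props}(1) gives that it preserves closures exactly: $f[\ccl^{M}(A)]=\ccl^{N}(f[A])$. Second, pseudo-universality yields a rigidity statement: if $C=\ccl^{C}(\ran\bar c)$ is generated by a tuple $\bar c$ and $\phi_1,\phi_2\colon C\to D$ are $\K$-embeddings with $\phi_1(\bar c)=\phi_2(\bar c)$ entrywise, then $\phi_1=\phi_2$ (apply the definition of pseudo-universal with $A=\ran\bar c$, using $\ccl^{C}(A)=UC$). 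In particular there is at most one $\K$-embedding between two structures identifying prescribed generating tuples.

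Next I would fix $\lambda=\LS(\K)$ and observe, via the LST axiom together with $\lambda^{<\mu}=\lambda$, that $|\ccl^{N}(\bar a)|\le\lambda$ whenever $\bar a\in\fct{<\mu}{N}$; hence there are at most $2^{\lambda}$ isomorphism types of pairs $(C,\bar c)$ with $C=\ccl^{C}(\ran\bar c)$ and $\bar c\in\fct{<\mu}{C}$. For each such type $[C,\bar c]$, whose associated arity is the length $\alpha<\mu$ of $\bar c$, and for each $d\in UC$, I introduce a function symbol $F_{[C,\bar c],d}$ of arity $\alpha$, and I fix once and for all a distinguished element in each isomorphism type of $\emptyset$-generated structure. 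This makes $\bigtau\supseteq\tau$ a $\mu$-ary vocabulary with $|\bigtau|\le 2^{\lambda}\cdot\lambda=2^{\LS(\K)}$. To interpret $F_{[C,\bar c],d}$ in $N$ on a tuple $\bar a\in\fct{\alpha}{N}$: if $(\ccl^{N}(\bar a),\bar a)$ has type $[C,\bar c]$, let $\phi\colon C\to\ccl^{N}(\bar a)$ be the unique isomorphism with $\phi(\bar c)=\bar a$ (unique by rigidity) and set $F^{N}_{[C,\bar c],d}(\bar a)=\phi(d)$; otherwise set the value to $a_0$ when $\alpha\ge 1$, and to the distinguished point of $\ccl^{N}(\emptyset)$ when $\alpha=0$ (here the hypothesis that $\K$ omits the empty structure guarantees $\ccl^{N}(\emptyset)\neq\emptyset$, so this point exists). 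Let $\widehat N$ denote the resulting expansion and $\bigK$ the class of all $\widehat N$ ordered by $\bigtau$-substructure inclusion.

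Then I would verify the three required conclusions. That the expansion is functorial follows from preservation of closures and rigidity: each $\K$-embedding preserves every $F_{[C,\bar c],d}$, since it carries a tuple to one of the same type and carries distinguished points to distinguished points, so the reduct functor $\bigK\to\K$ is an isomorphism of concrete categories. That $\bigK$ is universal rests on the computation that a subset $B\subseteq UN$ is closed under all the new symbols if and only if $B=\ccl^{N}(B)$, i.e.\ is the universe of a $\lea$-substructure: the forward direction uses local character (Fact \ref{cl-props}(2)) to write any $d\in\ccl^{N}(B)$ as $d\in\ccl^{N}(\bar a)$ for some $\bar a\in\fct{<\mu}{B}$, whence $d=F^{N}_{[C,\bar c],d'}(\bar a)\in B$ for the appropriate type and $d'$, while the nullary symbols force $\ccl^{N}(\emptyset)\subseteq B$ and thereby exclude the empty substructure; the backward direction is immediate by absoluteness of $\ccl$ in $\lea$-substructures. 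Consequently the $\bigtau$-substructures of members of $\bigK$ are precisely the $\lea$-substructures, which lie in $\bigK$, and closure under isomorphism and $\mu$-directed unions follows from the absoluteness of $\ccl$ on $(<\mu)$-tuples (any such tuple in a $\mu$-directed union already lies in a member of the system).

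I expect the main obstacle to be the bookkeeping that makes the new symbols \emph{simultaneously} well defined, total, and preserved by embeddings. Well-definedness is precisely where pseudo-universality is indispensable, through the rigidity of isomorphisms between $(<\mu)$-generated structures; and totality forces the treatment of ``wrong type'' tuples, the nullary case being the one that genuinely requires the nonemptiness hypothesis together with the choice of distinguished points, since there the default value cannot be read off the argument tuple.
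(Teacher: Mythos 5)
Your proof is correct and is essentially the paper's own argument in different packaging: your symbols $F_{[C,\bar c],d}$ correspond exactly to the paper's $\equiv$-classes ``coding closure'' (a class $[(\ba b, M)]_{\equiv}$ is precisely the isomorphism type of $(\ccl^M(\ba),\ba)$ together with the element matching $b$), your rigidity lemma is the paper's uniqueness-of-$b$ observation via pseudo-universality, and the verification that substructures are closed proceeds by the same local-character argument with the same $2^{\LS(\K)}$ count. The only (immaterial) difference is the default interpretation on mismatched tuples, where you use the first coordinate for positive arity and a distinguished point of $\ccl^N(\emptyset)$ in the nullary case, while the paper uniformly picks an element of $\ccl^M(\emptyset)$ via a fixed well-ordering of the classes.
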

\begin{proof}
  Any abstract class admits a notion of type, called Galois (or orbital) types in the literature. They can be defined as the finest notion of type preserving $\K$-embeddings (see \cite[2.16]{sv-infinitary-stability-afml} for a formal definition). The proof, in short, is to take the $(<\mu)$-Galois Morleyization (i.e.\ expand the language with a symbol for every type of length less than $\mu$, see \cite[3.3]{sv-infinitary-stability-afml}) and then add Skolem functions. This works since types are able to code the closure operator. We now give a self-contained implementation of this proof.

  We will write $\ccl^M (\ba)$ instead of $\ccl^M (\ran{\ba})$ (here $\ran{\ba}$ denotes the range of $\ba$, i.e.\ the set of elements in the sequence $\ba$) and $\ba b$ for the concatenation of the sequence $\ba$ with the element $b$. For $M_1, M_2 \in \K$, $\ba_\ell \in \fct{<\mu}{M_\ell}$, $\ell = 1,2$, write $(\ba_1, M_1) \equiv (\ba_2, M_2)$ if there exists an isomorphism $f: \ccl^{M_1} (\ba_1) \cong \ccl^{M_2} (\ba_2)$ such that $f (\ba_1) = \ba_2$. Note that $\equiv$ is an equivalence relation with at most $2^{\LS (\K)}$-many classes. We say that an equivalence class $C$ \emph{codes closure} if $C = [(\ba b, M)]_{\equiv}$ implies that $b \in \ccl^M (\ba)$. Note that if $C$ codes closure, $C = [(\ba b, M)]_{\equiv}$, and $(\ba b', M) \equiv (\ba b, M)$, then $b' = b$. Indeed, by definition there is an isomorphism $f: \ccl^{M} (\ba b) \cong \ccl^{M} (\ba b')$ such that $f (\ba b) = f (\ba b')$. This is in particular an isomorphism $f: \ccl^{M} (\ba) \cong \ccl^{M} (\ba)$ sending $\ba$ to $\ba$. The identity is another such isomorphism, hence by pseudo-universality $f$ must be the identity.

  Fix a well-ordering $\preceq$ on the set of equivalence classes coding closure. Let $\bigtau$ consist of $\tau (\K)$ together with a new $\alpha$-ary function symbol $f_C$ for each $C$ coding closure with $C = [(\ba b, M)]_{\equiv}$, $\ell (\ba) = \alpha$. For each $M \in \K$, define a $\bigtau$-expansion $\bigM$ by setting $f_C^{\bigM} (\ba)$ to be the unique $b \in \ccl^{M} (\ba)$ such that $C = [(\ba b, M)]_{\equiv}$, provided it exists. Otherwise, let $D$ be $\preceq$-least coding closure such that there exists $b' \in M$ with $D = [(b', M)]_{\equiv}$ and set $f_C^{\bigM} (\ba) = b'$. Note that such a $D$ always exists as $\ccl^{M} (\emptyset) \neq \emptyset$ (we are assuming the empty structure is not part of $\K$).

  Let $\widehat{K} := \{\bigM \mid M \in \K\}$. It is easy to check that $\widehat{K}$ is a functorial expansion of $\K$ (see also \cite[3.5]{sv-infinitary-stability-afml}), hence we can let $\bigK := (\widehat{K}, \leap{\bigK})$, where $M \leap{\bigK} N$ if and only if $M \subseteq N$ and $M \rest \tau (\K) \lea N \rest \tau (\K)$. We claim that $\bigK$ is a universal $\mu$-AEC. It is enough to show that for any $N \in \bigK$, $M \subseteq N$ implies that $M \leap{\bigK} N$. Indeed, noting that $\K$, and hence $\bigK$, admits intersections, it suffices to show that $\ccl^{N} (M) = M$. In what follows, we do not distinguish between $\equiv$ computed in $\K$ and $\bigK$ and between the closure operations $\ccl_{\K}$ and $\ccl_{\bigK}$: since $\bigK$ is a functorial expansion of $\K$, they are the same up to expansion of the language.

  Clearly, $M \subseteq \ccl^N (M)$, so let us check that $\ccl^N (M) \subseteq M$. Let $b \in \ccl^N (M)$. By local character (Fact \ref{cl-props}), there exists $A \subseteq U M$ of size less than $\mu$ such that $b \in \ccl^N (A)$. Let $\ba$ be an enumeration of $A$. Let $C := [(\ba b, N)]_{\equiv}$. Then $C$ codes closure and $b \in N$, hence we must have that $f_C^N (\ba) = b$. Since $M \subseteq N$, $b = f_C^N (\ba) = f_C^M (\ba)$, so $b \in U M$, as desired.
\end{proof}

\begin{remark}\label{pseudo-univ-equiv-rmk}
  The restriction that $\K$ does not contain the empty structure is essential, as otherwise one would not be able to add constant symbols to the vocabulary to code the closure of the empty set. This is, nonetheless, not a serious restriction: given any $\mu$-AEC $\K$ there is a $\mu$-AEC $\K'$ that is isomorphic to $\K$ as a category and does not contain the empty structure: for each $M \in \K$, expand $M$ to a $(\tau (\K) \cup \{c\})$-structure $M' \in \K'$, where $c$ is a new constant symbol, such that $c^{M'} \notin U M$, $U M' = \{c^{M'}\} \cup U M$, and a relation $R^{M'} (\ba)$ holds if and only if either $c^{M'} \in \text{ran} (\ba)$ or $c^{M'} \notin \text{ran} (\ba)$ and $R^{M} (\ba)$.  For any function symbol $f\in\tau (\K)$, define $f^{M'} (\ba) = c^{M'}$ if $c^{M'} \in \text{ran} (\ba)$ and $f^{M'} (\ba) = f^{M} (\ba)$ otherwise. Finally, for $M_1', M_2' \in \K'$, let $M_1 \leap{\K'} M_2$ if and only if $M_1 \lea M_2$ and $c^{M_1'} = c^{M_2'}$.
  \end{remark}

\section{Accessible categories}

To present category-theoretic equivalents of $\mu$-universal classes and $\mu$-AECs admitting intersections, we require the notion of an accessible category (see \cite{makkai-pare} or \cite{adamek-rosicky}).

\begin{defin}
  Let $\ck$ be a category and let $\lambda$ be a regular cardinal.

  \begin{enumerate}
  \item An object $M$ is \emph{$\lambda$-presentable} if its hom-functor $\ck(M,-):\ck\to\Set$ preserves $\lambda$-directed colimits. Put another way, $M$ is $\lambda$-presentable if for any morphism $f:M\to N$ with $N$ a $\lambda$-directed colimit $\langle \phi_\alpha:N_\alpha\to N\rangle$, $f$ factors essentially uniquely through one of the $N_\alpha$, i.e.\ $f=\phi_\alpha f_\alpha$ for some $f_\alpha:M\to N_\alpha$.
  \item $\ck$ is \emph{$\lambda$-accessible} if it has $\lambda$-directed colimits and $\ck$ contains a set $S$ of $\lambda$-presentable objects such that every object of $\ck$ is a $\lambda$-directed colimit of objects in $S$.
  \item $\ck$ is \emph{accessible} if it is $\lambda'$-accessible for some regular cardinal $\lambda'$.
  \end{enumerate}
\end{defin}

Intuitively, an accessible category is a category with all  sufficiently directed colimits and such that every object can be written as a highly directed colimit of ``small'' objects.  Here ``small'' is interpreted in terms of {\em presentability}, a notion of size that makes sense in any (possibly non-concrete) category. In the category of sets, of course, a set is $\lambda$-presentable if and only if its cardinality is less than $\lambda$; in an AEC $\K$, the same is true for all $\lambda>\LS(\K)$.  

From \cite[\S4]{mu-aec-jpaa}, we have that $\mu$-AECs are the same as accessible categories whose morphisms are monomorphisms, up to equivalence of categories:

\begin{fact}\label{mu-aec-acc}
  If $\K$ is a $\mu$-AEC, then it is an $\LS (\K)^+$-accessible category with all $\mu$-directed colimits whose morphisms are monomorphisms. Conversely, any $\mu$-accessible category whose morphisms are monomorphisms is equivalent to a $\mu$-AEC.
\end{fact}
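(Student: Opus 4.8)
The statement splits into two directions, which I would treat separately.

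For the forward direction, suppose $\K$ is a $\mu$-AEC. Its morphisms are $\K$-embeddings, hence injective homomorphisms, hence monomorphisms, so that part is immediate. To see that $\K$ has all $\mu$-directed colimits, computed as unions, I would take a $\mu$-directed diagram, replace it up to isomorphism by a $\mu$-directed system of $\tau$-substructure inclusions, and apply the chain axioms: axiom (a) yields that the union $M$ lies in $\K$, (b) that the legs are $\K$-embeddings, and (c) (together with coherence) the universal property---given any cocone $(\psi_i \colon M_i \to N)$, the images $\psi_i[M_i] \lea N$ form a $\mu$-directed system of substructures of $N$ whose union is $\lea N$ by (c), so $\bigcup_i \psi_i$ is a $\K$-embedding. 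Since $\LS(\K)^+ \ge \mu$, this already gives all $\LS(\K)^+$-directed colimits. For $\LS(\K)^+$-accessibility it then remains to take $S$ to be a set of representatives of the models of size $\le \LS(\K)$ and to check two things. First, every $M \in \K$ is the $\LS(\K)^+$-directed colimit of its $\lea$-substructures of size $\le \LS(\K)$: both the fact that these cover $UM$ and their $\LS(\K)^+$-directedness follow from the LST axiom, using $\LS(\K) = \LS(\K)^{<\mu}$ to keep the relevant unions of size $\le \LS(\K)$. Second, each such $M_0$ is $\LS(\K)^+$-presentable; this is the delicate point. Given $f \colon M_0 \to N$ with $N = \colim_\alpha N_\alpha$ an $\LS(\K)^+$-directed colimit, since colimits are unions the at most $\LS(\K)$-many images $f(a)$ all lie in a single $\phi_\alpha[U N_\alpha]$ by directedness; coherence then shows that the induced factorization $M_0 \to N_\alpha$ is a $\K$-embedding, and essential uniqueness follows by the same argument.

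For the converse, let $\ck$ be $\mu$-accessible with all morphisms monomorphisms, and fix a representative set $\ca$ of its $\mu$-presentable objects, so that every object of $\ck$ is a $\mu$-directed colimit of objects of $\ca$. I would build a vocabulary $\tau$ with a unary predicate $P_A$ for each $A \in \ca$ and a unary function symbol for each morphism $h \colon A \to B$ of $\ca$, and define a functor $E \colon \ck \to \Str(\tau)$ (essentially the restricted Yoneda embedding) by $U E M = \coprod_{A \in \ca} \ck(A, M)$, interpreting $P_A$ as the elements of sort $A$ and $h$ as precomposition $g \mapsto g \circ h$. Because the objects of $\ca$ are $\mu$-presentable and generate $\ck$, the functor $E$ is fully faithful and preserves $\mu$-directed colimits, and it sends monomorphisms to injective homomorphisms. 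Setting $E M \lea E N$ exactly when the substructure inclusion is induced by a morphism of $\ck$ then yields a partial order refining the substructure relation, and $\K := (E[\ck], \lea)$ is an abstract class equivalent to $\ck$ via $E$.

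It then remains to verify that $\K$ satisfies the $\mu$-AEC axioms, which I expect to be the main obstacle. Coherence and the chain axioms follow from full-faithfulness of $E$, its preservation of $\mu$-directed colimits, and the fact that such colimits in $\Str(\tau)$ are computed sortwise as unions (here the monomorphism hypothesis is essential). The real work is the LST axiom: I would take $\LS(\K)$ to be a regular cardinal with $\LS(\K) = \LS(\K)^{<\mu} \ge |\tau| + \mu$ that also bounds $|{\coprod_{A \in \ca} \ck(A, P)}|$ for every $P \in \ca$ (a legitimate bound since $\ca$ is a set and hom-sets are small), and then, given $E M$ and a subset of size $\le \LS(\K)$, write $M$ as a $\mu$-directed colimit of objects of $\ca$ and use $\mu$-presentability to absorb the subset into the image of a subobject $M_0 \to M$ that is itself a $\mu$-directed colimit of at most $\LS(\K)$-many objects of $\ca$; such an $M_0$ has $|U E M_0| \le \LS(\K)$ and satisfies $E M_0 \lea E M$ since its comparison map is a monomorphism. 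The conceptual ingredients---full-faithfulness and colimit-preservation of the restricted Yoneda embedding---are standard accessible-category facts, but converting the abstract equivalence into the concrete combinatorial data of a $\mu$-AEC, above all a single working cardinal $\LS(\K)$ together with downward closure under small substructures, is where the genuine effort lies.
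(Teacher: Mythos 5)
Your proposal is correct and matches, in essentially every particular, the argument the paper is implicitly relying on: Fact \ref{mu-aec-acc} is quoted from \cite[\S4]{mu-aec-jpaa} without proof, and the proof there likewise computes $\mu$-directed colimits as unions of images via the chain and coherence axioms, takes the models of cardinality at most $\LS(\K)$ as the $\LS(\K)^+$-presentable generators, and, for the converse, realizes $\ck$ inside $\Set^{\ca^{\op}}$ viewed as many-sorted unary structures via the (fully faithful, $\mu$-directed-colimit-preserving) restricted Yoneda embedding. The only point to tidy is that the LST axiom must be verified for \emph{arbitrary} $A \subseteq U M$ with the bound $|A|^{<\mu} + \LS(\K)$, not merely for $|A| \le \LS(\K)$; your construction already yields this, by taking the $\mu$-directed subposet $I_0$ of the index diagram to have size $|A|^{<\mu} + \LS(\K)$ rather than $\LS(\K)$.
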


In general, a $\mu$-AEC need not be a $\mu$-accessible category and, in fact, the least cardinal $\lambda$ such that $\K$ is $\lambda$-accessible cannot be bounded by a function of $\mu$: this is the case, in short, because the index of accessibility is determined as much by the parameter $\LS(\K)$ as by $\mu$. For example, given a regular cardinal $\lambda$, the ($\aleph_0$-)AEC $\K$ of sets of cardinality at least $\lambda$ (ordered by subset inclusion) contains no $\lambda$-presentable objects, hence cannot be $\aleph_0$-accessible or even $\lambda$-accessible.  It is, however, $\lambda^+$-accessible, as $\LS(\K)=\lambda$. We can show, though, that $\mu$-AECs admitting intersections \emph{are} $\mu$-accessible:

\begin{lem}\label{inter-acc-lem}
  If $\K$ is a $\mu$-AEC admitting intersections, then $\K$ is $\mu$-accessible.
\end{lem}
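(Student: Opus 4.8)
The plan is to realize $\K$ as a $\mu$-accessible category by exhibiting a set of $\mu$-presentable objects from which every object is built as a $\mu$-directed colimit. Since $\K$ is a $\mu$-AEC, it has all $\mu$-directed colimits by the chain axioms, and these are \emph{concrete}: a $\mu$-directed colimit is, up to isomorphism, the $\lea$-directed union of its stages. The natural candidate generators are the $(<\mu)$-generated closures, namely the structures $\ccl^N(A)$ with $N \in \K$ and $A \subseteq U N$ of size $<\mu$; note these belong to $\K$ because $\ccl^N(A) \lea N$.

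First I would check that every $N \in \K$ is the $\mu$-directed colimit of its $(<\mu)$-generated closures. Ordering the set of $(<\mu)$-sized subsets of $U N$ by inclusion gives a $\mu$-directed poset since $\mu$ is regular, and $A \mapsto \ccl^N(A)$ is a functor into $\K$ whose transition maps are the inclusions $\ccl^N(A) \lea \ccl^N(A')$ for $A \subseteq A'$ (monotonicity and $\lea$-comparability of the closures follow from Fact \ref{cl-props}(1)). As every $b \in U N$ lies in $\ccl^N(\{b\})$, the union of this $\mu$-directed system is $N$, so by the chain axioms $N$ is its colimit.

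The heart of the argument is to show each $M_0 := \ccl^N(A)$ with $|A| < \mu$ is $\mu$-presentable. Let $f \colon M_0 \to P$ be a $\K$-embedding with $P = \colim_i P_i$ a $\mu$-directed colimit, realized concretely so that each $P_i \lea P$ and $P = \bigcup_i U P_i$. Since $|f[A]| < \mu$ and the system is $\mu$-directed, all of $f[A]$ lies in a single stage $P_{i_0}$. Because closure is preserved by isomorphisms in $\K$ and $f$ is an isomorphism onto $f[M_0] \lea P$, we have, using $M_0 = \ccl^{M_0}(A)$ and Fact \ref{cl-props}(1), that $f[M_0] = \ccl^P(f[A]) = \ccl^{P_{i_0}}(f[A]) \lea P_{i_0}$. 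Hence $f$ corestricts to a $\K$-embedding $M_0 \to P_{i_0}$, giving the required factorization; essential uniqueness is immediate since all morphisms of $\K$, in particular the colimit maps $P_{i_0} \to P$, are monomorphisms.

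Finally I would bound the size of the generators to obtain a genuine \emph{set}. By the LST axiom, $|U \ccl^N(A)| \le |A|^{<\mu} + \LS(\K) = \LS(\K)$, using $|A| < \mu \le \LS(\K)$ and $\LS(\K) = \LS(\K)^{<\mu}$. As $|\tau(\K)| \le \LS(\K)$, there are only set-many isomorphism types of such closures, so a set $S$ of representatives exists; each member of $S$ is $\mu$-presentable (presentability being iso-invariant), and every $N \in \K$ is a $\mu$-directed colimit of objects of $S$ after transporting the diagram of the second paragraph across isomorphisms. This establishes $\mu$-accessibility. The main obstacle is precisely the $\mu$-presentability step: it is the admits-intersections hypothesis, via Fact \ref{cl-props}(1) and the iso-invariance of $\ccl$, that forces the closure of $f[A]$ computed in the colimit to already sit inside a single stage $P_{i_0}$---this is exactly what fails for a general $\mu$-AEC, where the index of accessibility genuinely depends on $\LS(\K)$.
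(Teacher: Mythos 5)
Your proof is correct and follows essentially the same route as the paper's: take the $\mu$-generated objects $\ccl^N(A)$, $|A|<\mu$, as the generating set, write every $N$ as the $\mu$-directed union of these closures via local character, and verify $\mu$-presentability using the isomorphism-invariance of $\ccl$ together with Fact \ref{cl-props}(1). The only difference is one of detail: you spell out the presentability verification and the $\LS(\K)$ size bound that the paper dismisses as ``easy to check'' and ``clearly,'' respectively.
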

\begin{proof}
  By definition of a $\mu$-AEC, $\K$ has all $\mu$-directed colimits. It remains to exhibit a set of $\mu$-presentable objects in $\K$ such that every element of $\K$ is a $\mu$-directed colimit thereof.

  Let us call $M \in \K$ \emph{$\mu$-generated} if there exists a set $A \subseteq U M$ with $|A| < \mu$ such that $M = \ccl^M (A)$. Clearly, there is only a set of isomorphism types of $\mu$-generated objects. Moreover by the local character property of the closure operator (Fact \ref{cl-props}), it follows that any $N \in \K$ can be written as $\bigcup_{A \subseteq U N, |A| < \mu} {\ccl^N (A)}$. In other words, every $N$ is a $\mu$-directed colimit of $\mu$-generated objects. Moreover it is easy to check that $\mu$-generated objects are $\mu$-presentable. This completes the proof.
\end{proof}

\section{Axiomatizability of accessible categories}

For completeness, we recall that accessible categories can be presented syntactically. For an $\Ll_{\infty, \infty}$-sentence $\phi$, we denote by $\Mod (\phi)$ the category of models of $\phi$ with homomorphisms (i.e.\ mappings preserving all functions and relations). We say that a category $\ck$ is \emph{axiomatizable} by $\phi$ if $\ck$ is equivalent to the category $\Mod (\phi)$. We call $\phi$ \emph{basic} if it is a conjunction of sentences of the form $\forall \bx (\phi_1 \rightarrow \phi_2)$, where $\phi_1$ and $\phi_2$ are positive existential formulas. Following \cite[p.~58]{makkai-pare}, let us call a category \emph{$(\infty, \mu)$-elementary} if it is axiomatizable by a \emph{basic} $\Ll_{\infty, \mu}$-sentence (this is the same as the category of models of an arbitrary $\Ll_{\infty, \mu}$-sentence ordered by elementarity according to a suitable fragment, see \cite[3.2.8]{makkai-pare}). 

A theorem of the second author \cite{rosicky-abstract, rosicky-thesis} asserts that accessible categories are precisely the $(\infty, \infty)$-elementary ones. More precisely, by \cite[3.2.3, 3.3.5, 4.3.2]{makkai-pare} we have:

\begin{fact}\label{lair-precise} \
  \begin{enumerate}
  \item Any $(\infty, \mu)$-elementary category is accessible and has all $\mu$-directed colimits.
  \item Any $\mu$-accessible category is $(\infty, \mu)$-elementary.
  \end{enumerate}
\end{fact}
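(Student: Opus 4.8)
The plan is to prove the two halves separately, following the model-theoretic route that identifies $\Mod (\phi)$ with a completion of its category of small models. For part (1), fix a basic $\Ll_{\infty, \mu}$-sentence $\phi = \bigwedge_j \forall \bx_j (\phi_j^1 \to \phi_j^2)$ and work inside $\Str (\tau)$, the category of $\tau$-structures with homomorphisms, in which $\mu$-directed colimits are concrete: the underlying set is the colimit in $\Set$, a relation holds in the colimit exactly when it holds of some preimage at some stage, and the operations are induced. The technical heart is a preservation lemma with two halves. First, every positive existential formula is preserved forward by homomorphisms, hence along the colimit injections; this handles the consequents $\phi_j^2$. Second, every positive existential formula (with conjunctions of size $<\mu$) is \emph{reflected} back to a single stage along a $\mu$-directed colimit, since its existential quantifiers bind $<\mu$ variables and the $<\mu$-many atomic facts and witnesses it requires can all be located at one index by $\mu$-directedness; this handles the premises $\phi_j^1$. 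Combining the two: if each $M_i \models \forall \bx (\phi_1 \to \phi_2)$ and the colimit $M$ satisfies $\phi_1 (\ba)$, reflect $\phi_1$ to a stage $i$, apply the implication there, and push $\phi_2$ forward. Thus $\phi$ is preserved, so $\Mod (\phi)$ is closed under $\mu$-directed colimits in $\Str (\tau)$ and in particular has them.

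It remains to exhibit a generating set of presentable objects. Here I would run a Löwenheim--Skolem--Tarski construction: fix a regular $\lambda$ with $\lambda = \lambda^{<\mu} \ge |\tau| + |\phi| + \mu$ and show that every subset $A$ of a model $M \models \phi$ with $|A| < \lambda$ sits inside a substructure $M_0 \subseteq M$ of size $<\lambda$ with $M_0 \models \phi$. Unlike the universal case (Fact \ref{tarski-pres}), closing under the operations of $\tau$ does not suffice, because the consequents are existential; instead one builds an increasing $\mu$-chain $A = A_0 \subseteq \cdots \subseteq A_n \subseteq \cdots$ ($n < \mu$) in which $A_{n+1}$ adjoins, for every instance $\phi_j^1 (\ba)$ with $\ba$ from $A_n$ that holds in $M$, witnesses realizing the matching consequent $\phi_j^2 (\ba)$. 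There are at most $|A_n|^{<\mu} \cdot |\phi| < \lambda$ such instances, so the chain stays below $\lambda$; by regularity of $\mu$ every instance over the union $M_0$ draws its parameters from a single $A_n$, whence $M_0 \models \phi$. Up to isomorphism the resulting models form a set of $\lambda$-presentable objects of $\Mod (\phi)$, and every model is the $\lambda$-directed colimit of the such submodels it contains. Hence $\Mod (\phi)$ is $\lambda$-accessible and, by the previous paragraph, has all $\mu$-directed colimits.

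For part (2), let $\ck$ be $\mu$-accessible and let $\ca$ be a small full subcategory representing its $\mu$-presentable objects. The representation theorem for accessible categories gives $\ck \simeq \mu\text{-}\Ind (\ca)$, and $\mu\text{-}\Ind (\ca)$ is equivalent to the category of $\mu$-flat $\Set$-valued functors on $\ca$ (those that are $\mu$-directed colimits of representables, equivalently those whose category of elements is $\mu$-filtered), a full subcategory of a presheaf category. I would then write down the canonical language $L$ with one sort for each object of $\ca$ and one operation symbol for each morphism of $\ca$, together with a theory $T$ of two kinds. The functoriality axioms (respecting identities and composition) are equational, hence basic. The flatness axioms assert that the category of elements is $\mu$-filtered: nonemptiness, that any $<\mu$ elements factor through a common one, and that any two morphisms agreeing after some further morphism are identified. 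Each is a $\forall\exists$ sentence whose matrix is a (possibly large) disjunction over the morphisms of $\ca$, hence positive existential, so $T$ is basic. Verifying that $\Mod (L, T)$ is precisely the category of $\mu$-flat functors yields $\ck \simeq \Mod (L, T)$, i.e.\ $\ck$ is $(\infty, \mu)$-elementary.

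The main obstacles are, in part (1), the Löwenheim--Skolem construction: one must close carefully under existential witnesses for consequents while keeping the cardinality below $\lambda$, then verify $\lambda$-presentability of the resulting models and carry out the cardinal bookkeeping pinning down the index of accessibility. In part (2), the delicate point is encoding $\mu$-filteredness of the category of elements faithfully by basic $\Ll_{\infty, \mu}$ sentences and checking that their models are \emph{exactly} the flat functors. The infinitary disjunctions over $\Mor (\ca)$ are essential, so one must keep them out of the conjunctive premises $\phi_j^1$; otherwise the reflection step in part (1) would fail, and the two halves of the argument would cease to be compatible.
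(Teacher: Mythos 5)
The paper offers no proof of this Fact---it is quoted from Makkai--Par\'e \cite[3.2.3, 3.3.5, 4.3.2]{makkai-pare} (see also Ad\'amek--Rosick\'y \cite[5.33, 5.35]{adamek-rosicky})---and your proposal correctly reconstructs exactly the standard arguments behind those citations: part (1) via preservation/reflection of positive-existential formulas along $\mu$-directed colimits in $\Str(\tau)$ plus a L\"owenheim--Skolem argument, and part (2) via $\ck \simeq \mu\text{-}\Ind(\ca)$ realized as the $\mu$-flat presheaves on $\ca$, axiomatized by a basic theory in the canonical unary many-sorted language. Both halves are the right proofs, and your insistence that positive-existential formulas carry conjunctions of size $<\mu$ (with disjunctions unbounded) is precisely the convention under which the reflection lemma is true.

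Two slips are worth fixing, though neither derails the argument. First, your closing remark gets the roles of the connectives backwards: it is large \emph{disjunctions} that are harmless in the premises $\phi_j^1$, since a true disjunction reflects to a stage via whichever disjunct is true, and indeed $\forall \bx\, (\bigvee_i \varphi_i \to \psi)$ is equivalent to the basic theory $\{\forall \bx\, (\varphi_i \to \psi)\}_i$. What must be excluded from premises (equivalently, from positive-existential formulas) is conjunctions of size $\ge \mu$: with $\mu = \aleph_0$, the sentence $\forall x\, \bigl(\bigwedge_{n<\omega} R_n(x) \to P(x)\bigr)$ holds vacuously at every stage of the chain of one-point structures in which $R_0, \dots, R_{k-1}$ hold but $P$ fails, yet fails in the colimit---so reflection breaks for infinite conjunctions, not for infinite disjunctions. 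Second, your choice $\lambda = \lambda^{<\mu}$ regular does not by itself guarantee that the small models are $\lambda$-presentable: a model of size $\kappa < \lambda$ can carry $\kappa^{<\mu} = \lambda$ many atomic facts (e.g.\ $\mu = \aleph_1$, $\lambda = \aleph_{\omega+1}$, $\kappa = \aleph_\omega$ under GCH), and then a homomorphism into a $\lambda$-directed colimit need not factor through a stage, since each atomic fact is only witnessed cofinally. The standard repair is to take $\lambda = \rho^+$ with $\rho = \rho^{<\mu} \ge |\tau| + |\phi| + \mu$, so that $\kappa^{<\mu} \le \rho < \lambda$ for all $\kappa < \lambda$; this also preserves $\lambda = \lambda^{<\mu}$, so your L\"owenheim--Skolem closure goes through verbatim. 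Relatedly, in part (2) your filteredness axioms should, for $\mu > \aleph_0$, coequalize arbitrary families of fewer than $\mu$ parallel morphisms at once, not just pairs: pairwise coequalization cannot be iterated transfinitely inside the category of elements, so the pair axiom alone does not capture $\mu$-filteredness.
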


In this paper, we are interested in abstract classes where the morphisms are substructure embeddings. For $\phi$ an $\Ll_{\infty, \infty}$-sentence, denote by $\Emb (\phi)$ the category of models of $T$ with substructure embeddings (i.e.\ \emph{injective} mappings preserving all functions and relations \emph{and} reflecting all relations). Thus the difference between $\Mod$ and $\Emb$ when all morphisms are mono is the same as the difference between graphs ordered by the subgraph relation and graphs ordered by the \emph{induced subgraph} relation. From the point of view of category theory, this is not a serious difference:

\begin{lem}\label{mod-emb}
  Let $\mu$ be an infinite cardinals and let $\ck$ be a category. The following are equivalent:

  \begin{enumerate}
  \item\label{mod-emb-1} $\ck$ is equivalent to $\Mod (\phi)$, for $\phi$ a basic $\Ll_{\infty, \mu}$-sentence, and all the morphisms of $\ck$ are monomorphisms.
  \item\label{mod-emb-2} $\ck$ is equivalent to $\Emb (\phi)$, for $\phi$ a basic $\Ll_{\infty, \mu}$-sentence.
  \end{enumerate}
\end{lem}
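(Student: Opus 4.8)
The plan is to prove both implications by constructing, from a basic sentence $\phi$ of one kind, a basic sentence $\psi$ of the other kind whose model category (with the appropriate morphisms) is equivalent to the original. The key observation is that the distinction between $\Mod$ and $\Emb$ is exactly the distinction between "preserving relations" and "preserving and reflecting relations," i.e.\ between subgraphs and induced subgraphs. The standard device to bridge this gap is to replace each relation symbol by a \emph{pair} of relation symbols (one for the relation and one for its complement), and to axiomatize that these two behave as genuine complements.

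For the direction (\ref{mod-emb-2}) $\Rightarrow$ (\ref{mod-emb-1}), suppose $\ck \simeq \Emb(\phi)$. First I would note that since substructure embeddings are always injective, the morphisms of $\Emb(\phi)$ are automatically monomorphisms, so the ``all morphisms are mono'' clause of (\ref{mod-emb-1}) will come for free. I would then expand the vocabulary $\tau$ by adding, for each $n$-ary relation symbol $R \in \tau$, a new $n$-ary relation symbol $R'$ intended to denote the complement of $R$. Let $\psi$ be $\phi$ together with the basic axioms asserting $\forall \bx\, (R(\bx) \wedge R'(\bx) \to \bot)$ and $\forall \bx\, (\top \to R(\bx) \vee R'(\bx))$ for each $R$ (phrased so that both hypothesis and conclusion are positive existential; the first can be written using a contradiction coded as a positive existential formula, or one absorbs negation into the pairing trick). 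A homomorphism of $\tau'$-structures preserves both $R$ and $R'$; since in any model of $\psi$ the interpretation of $R'$ is forced to be exactly the complement of $R$, preserving $R'$ is equivalent to \emph{reflecting} $R$. Hence $\Mod(\psi)$ with its homomorphisms is isomorphic (via the reduct functor, which is an equivalence on the relevant subcategories) to $\Emb(\phi)$, and all these homomorphisms are injective, hence mono. This gives (\ref{mod-emb-1}).

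For the converse (\ref{mod-emb-1}) $\Rightarrow$ (\ref{mod-emb-2}), suppose $\ck \simeq \Mod(\phi)$ with all morphisms mono. The aim is to exhibit $\ck$ as $\Emb(\psi)$ for a suitable basic $\psi$. The natural move is again to introduce complement relations $R'$ and add the complementarity axioms as above, obtaining a sentence $\psi$ in the expanded vocabulary $\tau'$; a $\tau'$-structure satisfying the complementarity axioms is determined by its $\tau$-reduct, and a \emph{substructure embedding} of such $\tau'$-structures preserves and reflects all of $\tau'$, which for the original symbols amounts to preserving both $R$ and its complement---i.e.\ it preserves $R$, which is exactly what a $\phi$-homomorphism does. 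The point where I expect the real work, and the main obstacle, is making precise that the $\Mod(\phi)$-morphisms are \emph{injective}: the hypothesis only gives that they are monomorphisms in the category, not a priori that the underlying maps are injective. I would resolve this using Fact~\ref{lair-precise} together with the structure theory of accessible categories---monomorphisms in a $\mu$-accessible category of models need not be injective maps in general, so I would argue that one may replace $\ck$ up to equivalence by a concrete presentation in which monos are injective (invoking that an accessible category whose morphisms are monos is equivalent to a $\mu$-AEC by Fact~\ref{mu-aec-acc}, and embeddings of structures in a $\mu$-AEC are injective by definition). Once injectivity is secured, reflection of relations is arranged by the complement-relation expansion exactly as above, yielding $\ck \simeq \Emb(\psi)$.

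Throughout, the routine verifications---that the reduct functor is an equivalence of categories, that the added axioms are genuinely basic (conjunctions of $\forall \bx\,(\phi_1 \to \phi_2)$ with $\phi_i$ positive existential), and that $\psi$ remains an $\Ll_{\infty,\mu}$-sentence since we add only finitely many axioms per symbol and the arities are unchanged---I would treat briefly. The crux, as noted, is the interplay between categorical monomorphisms and injectivity of underlying maps, for which the correspondence with $\mu$-AECs recorded in Fact~\ref{mu-aec-acc} is the essential tool.
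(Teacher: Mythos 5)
Your direction (\ref{mod-emb-2}) $\Rightarrow$ (\ref{mod-emb-1}) follows the paper's proof except for one concrete omission: the claim that the monomorphism clause ``comes for free'' is false. Adding the complement symbols $R'$ forces homomorphisms between models of $\psi$ to \emph{reflect} each $R$, but nothing forces them to be \emph{injective}: a homomorphism may collapse two elements realizing the same atomic type. Already when $\tau$ has no relation symbols at all, your $\Mod(\psi)$ is just $\Set$ (all functions), while $\Emb(\phi)$ is the category of sets and injections; so neither the equivalence nor the monomorphism clause of (\ref{mod-emb-1}) holds. The paper closes exactly this gap by also adding a relation symbol $\neq$ coding non-equality, so that homomorphisms preserve $\neq$ and hence are injective; with that one addition your argument for this direction goes through and coincides with the paper's.

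The direction (\ref{mod-emb-1}) $\Rightarrow$ (\ref{mod-emb-2}) is where your proposal genuinely fails: the complement trick points the wrong way. A substructure embedding preserves \emph{and reflects} all relations, so the morphisms of your $\Emb(\psi)$ are the injective homomorphisms that moreover reflect each $R \in \tau$ (your step ``preserving both $R$ and its complement, i.e.\ it preserves $R$'' is a non sequitur---preserving $R'$ amounts to reflecting $R$, which a $\phi$-homomorphism need not do). Hence $\Emb(\psi)$ is in general a proper, non-full subcategory of $\Mod(\phi)$. Concretely, let $\tau = \{R\} \cup \{c_n : n < \omega\}$ with $R$ unary and let $\phi$ be the basic sentence $\forall x\, (x = x \to \bigvee_{n<\omega} x = c_n)$: between any two models there is at most one homomorphism, so all morphisms of $\Mod(\phi)$ are monomorphisms, yet a homomorphism may strictly enlarge $R$ (and may even be non-injective), and all such morphisms disappear in $\Emb(\psi)$. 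The paper's device for this direction is different: add a new \emph{sort} for each relation symbol and code $R$ as a set of tuples via projection functions; since $R$ is then no longer a relation symbol, an embedding need only preserve the coded tuples, and a tuple of a smaller model is free to acquire a code in a larger one---which matches homomorphisms exactly. The same example shows your fallback for the injectivity issue is not viable: non-injective monomorphisms genuinely occur in such $\Mod(\phi)$, and the appeal to Fact \ref{mu-aec-acc} is both unavailable (Fact \ref{lair-precise} yields only that $\Mod(\phi)$ is accessible with $\mu$-directed colimits, not that it is $\mu$-accessible) and circular with respect to the fixed parameter $\mu$: it would replace $\ck$ by a $\mu'$-AEC for some possibly much larger $\mu'$, which would then have to be axiomatized by a basic $\Ll_{\infty,\mu}$-sentence in the $\Emb$ presentation---the very problem being solved.
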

\begin{proof}[Proof idea]
  Assume (\ref{mod-emb-1}). Add a sort for each relation symbol and code relations as sets of tuples. Let $\phi'$ describe this. Then $\Mod (\phi)$ and $\Emb (\phi')$ are isomorphic. Therefore (\ref{mod-emb-2}) holds. Conversely, assume (\ref{mod-emb-2}). For each relation symbol $R$, add a relation symbol $\neg R$ coding the negation of $R$. Also add a relation symbol $\neq$ coding non-equality. Once again, we obtain a sentence $\phi'$ such that $\Mod (\phi')$ is isomorphic to $\Emb (\phi)$. Therefore (\ref{mod-emb-1}) holds.
\end{proof}

It follows, for example, (see Lemma \ref{inter-acc-lem} and Fact \ref{lair-precise}) that $\mu$-AECs admitting intersections are equivalent to the category of models of an $\Ll_{\infty, \mu}$-sentence. Note that the stronger statement that every $\mu$-AEC admitting intersections is \emph{directly} axiomatizable (without changing the category) by an $\Ll_{\infty, \mu}$-sentence is \emph{false}. Indeed, the AEC whose models are equivalence relations all of whose classes are countably infinite, ordered by the relation ``equivalence classes do not grow,'' is not the class of models of an $\Ll_{\infty, \omega}$-sentence.

Similarly, it is not known whether any AEC is directly $\Ll_{\infty, \LS (\K)^+}$-axiomatizable, but by Fact \ref{lair-precise}, it is \emph{equivalent} to a category of models of an $\Ll_{\infty, \LS (\K)^+}$-sentence.

It is also not known whether there is a natural category-theoretic definition of the $(\infty, \mu)$-elementary categories (see the discussion after \cite[4.3.3]{makkai-pare}). In fact, it is not known whether all $\mu$-AECs are $(\infty, \mu)$-elementary (this question was posed in \cite[\S5]{beke-rosicky} in relation to accessible categories with all morphisms monomorphisms).

\section{Locally multi- and polypresentable categories}

The concept of a locally presentable category is originally due to Gabriel and Ulmer \cite{gabriel-ulmer}. We will use the following definition: 

\begin{defin}
  For $\mu$ a regular cardinal, we say that a category $\ck$ is \emph{locally $\mu$-presentable} if it is $\mu$-accessible and has \emph{all} (small) colimits. We say it is \emph{locally presentable} if it is $\mu'$-locally presentable for some $\mu'$.
\end{defin}

Examples include the category of groups with group homomorphisms or any Grothendieck topos. Although we will move quickly to other characterizations, the initial definitions of, and motivations for, locally multipresentable and locally polypresentable categories arise from mathematically natural weakenings of the notion of colimit.  Just as colimits go back to initial objects, one can define a notion of \emph{multicolimit} that goes back to a \emph{multiinitial object}. This is a set $\ci$ of objects of a category $\ck$ such that for every object $M$ of $\ck$ there is a unique $i \in \ci$ and a unique morphism $f_i : i \rightarrow M$. For example, the category of fields and field homormorphisms does not have an initial object, but does have a multiinitial object: the prime fields in each characteristic. 

There is also the still weaker notion of a \emph{polyinitial} object. This is a set $\ci$ of objects of a category $\ck$ such that for every object $M$ in $\ck$:

\begin{enumerate}
\item There is a unique $i \in\ci$ having a morphism $i \to M$.
\item For each $i \in \ci$, given $f,g:i \to M$, there is a unique (isomorphism) $h:i \to i$ with $fh=g$.
\end{enumerate}

For example, the algebraic closures of the prime fields form a polyinitial object in the category of algebraically closed fields.

The multicolimit of a diagram $D$ in a category $\ck$ is a multiinitial object in the category of cones on $D$, i.e. a set of cones such that for any cone on $D$, there will be a unique induced map from exactly one of the members of the set.  The polycolimit of a diagram is defined similarly.

One then defines locally multi- and polypresentable categories by replacing colimits by multicolimits and polycolimits, respectively:

\begin{defin}
  For $\mu$ a regular cardinal, we say that a category $\ck$ is \emph{locally $\mu$-[multi/poly]presentable} if it is $\mu$-accessible and has all [multi/poly]colimits. \emph{Locally [multi/poly]presentable} means locally $\mu'$-[multi/poly]presentable for some $\mu'$.  
\end{defin}
\begin{remark}
  The set $\ci$ in the definition of a multiinitial object is allowed to be empty. Thus in a locally $\mu$-multipresentable category, a diagram of the form

  $$  
  \xymatrix@=3pc{
    M_1 & \\
    M_0 \ar [u] \ar [r] &
    M_2 & \\
  }
  $$
may not even be completable: there might not exist morphisms  $M_\ell \rightarrow N$ making the diagram commute. In model-theoretic terms, the category may not have the amalgamation property (see \cite{coloring-classes-jsl} for examples of universal classes failing the amalgamation property in non-trivial ways).
\end{remark}
\begin{example}\label{categ-examples} \
  \begin{enumerate}
  \item As noted above, the category of all fields with field homomorphisms is locally $\aleph_0$-multipresentable.  It is \emph{not} locally $\aleph_0$-presentable as it cannot have an initial object---a consideration of characteristics makes clear that no single field can map into all of the others.  There is, however, an initial object relative to the fields of each fixed characteristic, which together form the multiinitial family (\cite[4.25(2), 4.29]{adamek-rosicky}).
  \item The category of linearly ordered sets with order-preserving maps is locally $\aleph_0$-multipresentable, with multiinitial family consisting of two objects, one for the connected linear orders and one for the disconnected (\cite[4.25(1), 4.29]{adamek-rosicky}).
  \item The category of pre-Hilbert spaces (i.e.\ we do not require completeness) with linear orthogonal maps is locally $\aleph_0$-multipresentable, while the category of Hilbert spaces with linear orthogonal maps is locally $\aleph_1$-multipresentable (\cite{diers}).
  \item As noted above, the category of algebraically closed fields with fields homomorphisms is locally $\aleph_0$-polypresentable (\cite{hebert}).  Here the polyinitial family consists of the closures of the prime fields.
  \item Lamarche's categories of aggregates \cite{lamarche-thesis} and Coquand's categories of embeddings \cite{coquand-embeddings} are examples of, respectively, locally $\aleph_0$-multipresentable and locally $\aleph_0$-polypresentable categories whose morphisms are monomorphisms.  Both notions arise in theoretical computer science, as efforts to model the phenomenon of polymorphism in type theory.
  \end{enumerate}
\end{example}

We can also characterize each of these categories in terms of limits:

\begin{fact}\label{loc-limit-charact}
  Let $\ck$ be a $\mu$-accessible category.

  \begin{enumerate}
  \item\cite{lamarche-thesis} $\ck$ is locally $\mu$-polypresentable if and only if $\ck$ has wide pullbacks (that is, limits over diagrams $A_i\to A$, $i\in I$, where $I$ is a set).
  \item \cite{diers}, \cite[4.30]{adamek-rosicky} $\ck$ is locally $\mu$-multipresentable if and only if $\ck$ has all connected limits.
  \item \cite{gabriel-ulmer}, \cite[2.47]{adamek-rosicky} $\ck$ is locally $\mu$-presentable if and only if $\ck$ has all limits.
  \end{enumerate}
\end{fact}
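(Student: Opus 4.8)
The plan is to prove the three equivalences in parallel, since they instantiate a single pattern relating the colimit-flexibility built into each definition to a completeness condition:
\[
\text{left adjoint} \longleftrightarrow \text{all limits}, \qquad \text{left multiadjoint} \longleftrightarrow \text{connected limits}, \qquad \text{left polyadjoint} \longleftrightarrow \text{wide pullbacks}.
\]
The starting observation is that, for any small scheme $\cj$, a colimit (resp.\ multicolimit, polycolimit) of a $\cj$-diagram in $\ck$ is exactly a value of a left adjoint (resp.\ left multiadjoint, left polyadjoint) of the diagonal $\Delta_\cj \colon \ck \to \ck^\cj$. Thus the conditions ``$\ck$ has all colimits / multicolimits / polycolimits'' translate into ``every $\Delta_\cj$ admits a left adjoint / multiadjoint / polyadjoint,'' which is the form to which adjoint-functor techniques apply.

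For the direction from limits to colimit-flexibility, I would invoke the appropriate generalized adjoint functor theorem of Diers and Lamarche: a suitably small-bounded functor has a left adjoint (resp.\ multiadjoint, polyadjoint) precisely when it preserves all limits (resp.\ connected limits, wide pullbacks). Each $\Delta_\cj$ automatically preserves whatever limits exist in $\ck$, since limits in $\ck^\cj$ are computed pointwise; and $\mu$-accessibility of $\ck$ passes to $\ck^\cj$ and supplies the solution-set data that these theorems require. Hence, if $\ck$ is $\mu$-accessible and has all limits (resp.\ connected limits, wide pullbacks), every $\Delta_\cj$ has the corresponding kind of left adjoint, so $\ck$ has all colimits (resp.\ multicolimits, polycolimits) and is locally $\mu$-presentable (resp.\ multi-, polypresentable).

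For the converse, I would use the representation of each class as an accessibly embedded subcategory of a presheaf category $[\ca^\op, \Set]$, with $\ca$ the (small) full subcategory of $\mu$-presentable objects: a locally $\mu$-presentable category is a reflective subcategory closed under all limits (Gabriel--Ulmer), and Diers and Lamarche established the analogous theorems exhibiting locally $\mu$-multipresentable (resp.\ polypresentable) categories as multireflective (resp.\ polyreflective) subcategories closed under connected limits (resp.\ wide pullbacks). Since presheaf categories are complete and these limits are computed pointwise, $\ck$ inherits exactly the relevant class of limits, giving the desired direction.

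The hard part is the multi/poly bookkeeping rather than the accessibility, which enters only as a boundedness hypothesis. One must verify that it is precisely the \emph{connected} limits---not products or the terminal object---that correspond to multicolimits, and precisely the wide pullbacks that correspond to polycolimits. This reduces to analyzing the connected components of the comma categories of cones: a left multiadjoint exists when each such comma category is a disjoint union of connected categories each carrying an initial object, and the feature distinguishing the multi- and polyadjoint cases from the ordinary one is, roughly, the presence or absence of the equalizer-type uniqueness that forces each connected component to have a single initial map. Setting up the correct generalized adjoint functor theorem and matching its limit-preservation hypothesis to this connected-versus-wide-pullback dichotomy is where the real work lies; this is the content attributed to Diers and Lamarche.
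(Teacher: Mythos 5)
The paper offers no proof of this statement at all: it is a \emph{Fact}, imported wholesale from the cited literature (Diers for the multi case, Lamarche's thesis for the poly case, Gabriel--Ulmer and \cite[2.47, 4.30]{adamek-rosicky} for the rest), so your attempt can only be measured against those sources---and it reconstructs them faithfully. Your opening reduction is correct and matches the paper's definitions: since the paper defines the multicolimit (resp.\ polycolimit) of a diagram $D \colon \cj \to \ck$ as a multiinitial (resp.\ polyinitial) object in the category of cones, which is precisely the comma category $(D \downarrow \Delta_\cj)$, having all multi/polycolimits is exactly the existence of a left multi/polyadjoint to each diagonal $\Delta_\cj \colon \ck \to \ck^{\cj}$. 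The ``limits give colimit-flexibility'' direction is sound as sketched: $\Delta_\cj$ preserves all limits that exist in $\ck$ because limits in $\ck^{\cj}$ are pointwise, and accessibility supplies the solution-set data since $\Delta_\cj$ is an accessible functor between accessible categories (note that $\ck^{\cj}$ is accessible but possibly only for a raised index of accessibility; this is harmless, as only the solution-set condition is used). The converse via (multi/poly)reflective embeddings into $\Set^{\ca^{\op}}$ is also the standard route, and your closing analysis correctly isolates where connectedness earns its keep: for a connected diagram in a multireflective subcategory, all legs of the limiting cone factor through the \emph{same} member of the multireflecting family, with strict uniqueness of the factorization forcing the limit into the subcategory; in the poly case uniqueness holds only up to automorphism, which is exactly why equalizers (hence general connected limits) are lost---compare algebraically closed fields, where the equalizer of two embeddings would be a fixed field that need not be algebraically closed---while wide pullbacks survive.

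Two caveats keep this from being an independent proof, though neither is a flaw given the nature of the statement. First, the generalized adjoint functor theorems you invoke (a weakly initial set plus connected limits, resp.\ wide pullbacks, in each comma category yielding a multiinitial, resp.\ polyinitial, family) are themselves the hard content of Diers and Lamarche, and your sketch defers them, as you acknowledge. Second, in the converse direction, establishing that the canonical embedding $E \colon \ck \to \Set^{\ca^{\op}}$ of a locally $\mu$-multipresentable category is multireflective (equivalently, that $\ck$ is a $\mu$-cone-orthogonality class, which is how \cite[4.30]{adamek-rosicky} phrases it and how the paper uses it in its Lemma \ref{char-lem}) is itself a substantive step requiring the multicolimits plus accessibility, not a formality. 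So your proposal is best described as a correct and well-organized reassembly of exactly the cited theorems---which is the appropriate standard for a Fact stated without proof.
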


Note that the locally presentable categories whose morphisms are monomorphisms are not very interesting: since they have coequalizers, it must be the case that any two morphisms $f, g : M \rightarrow N$ are equal! In other words, between any two objects there is at most one morphism. Thus locally presentable categories whose morphisms are monomorphisms are exactly the complete lattices. In particular, they are small and their objects are rigid.

However, the situation is different for locally multi- and polypresentable categories:

\begin{lem}\label{mu-aec-categ}
  Let $\K$ be a $\mu$-AEC. If $\K$ admits intersections, then $\K$ is locally $\mu$-polypresentable. If in addition $\K$ is pseudo-universal, then $\K$ is locally $\mu$-multipresentable.
\end{lem}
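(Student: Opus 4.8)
The plan is to use the limit characterizations of Fact~\ref{loc-limit-charact}. By Lemma~\ref{inter-acc-lem}, a $\mu$-AEC $\K$ admitting intersections is $\mu$-accessible, and the same holds a fortiori under the stronger hypothesis of pseudo-universality. Thus it suffices to verify the relevant completeness conditions: by Fact~\ref{loc-limit-charact}(1), $\K$ is locally $\mu$-polypresentable once it has wide pullbacks, and by Fact~\ref{loc-limit-charact}(2), $\K$ is locally $\mu$-multipresentable once it has all connected limits. Since connected limits are generated by wide pullbacks and equalizers, my overall plan is: (i) construct wide pullbacks in any $\mu$-AEC admitting intersections, and (ii) construct equalizers under the additional hypothesis of pseudo-universality.

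For (i), I would realize wide pullbacks as intersections. Given a family of $\K$-embeddings $f_i : A_i \to A$, $i \in I$, identify each $A_i$ with its image $f_i[A_i] \lea A$ and set $P := \bigcap_{i \in I} f_i[A_i]$. An arbitrary intersection of $\lea$-substructures of $A$ is again a $\lea$-substructure (apply the closure operator to the set-theoretic intersection), so $P \lea A$. Since $P \subseteq f_i[A_i] \lea A$ and $P \lea A$, coherence gives $P \lea f_i[A_i]$; transporting along $f_i^{-1}$ yields a $\K$-embedding $p_i := f_i^{-1} \rest P : P \to A_i$, and by construction $f_i p_i$ is the inclusion $P \hookrightarrow A$ independently of $i$, so $(P, (p_i)_{i \in I})$ is a cone. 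For the universal property, given any cone $(Q, (q_i)_{i \in I})$ with common composite $g := f_i q_i : Q \to A$, the image $g[Q] = f_i[q_i[Q]]$ lies in every $f_i[A_i]$, hence in $P$; since $g$ is a $\K$-embedding, $g[Q] \lea A$, and coherence upgrades this to $g[Q] \lea P$, giving the required factorization $Q \to P$, unique because the inclusion $P \hookrightarrow A$ is monic.

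For (ii), I would realize the equalizer of parallel $\K$-embeddings $f, g : M \to N$ as the agreement set. Let $A := \{x \in U M \mid f(x) = g(x)\}$. Then $f \rest A = g \rest A$, so pseudo-universality forces $f \rest \ccl^M(A) = g \rest \ccl^M(A)$; but every point on which $f$ and $g$ agree already lies in $A$, so $\ccl^M(A) \subseteq A$ and hence $A = \ccl^M(A)$ is the universe of a $\lea$-substructure $E \lea M$. The inclusion $e : E \to M$ satisfies $fe = ge$, and any $h : Q \to M$ with $fh = gh$ has $h[Q] \subseteq A = U E$; as $h$ is a $\K$-embedding we get $h[Q] \lea M$, and coherence gives $h[Q] \lea E$, so $h$ factors uniquely through the monic $e$. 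Thus $E$ is the equalizer.

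The constructions themselves are short; the point to get right is the repeated use of the coherence axiom to promote a set-theoretic inclusion $B \subseteq M$ between $\lea$-substructures of a common object to the relation $B \lea M$, which is what keeps the candidate projections and factorizations inside $\K$ rather than merely in the ambient category of $\tau$-structures. The conceptual obstacle worth highlighting is that pseudo-universality is used precisely, and only, to guarantee equalizers: it is exactly the statement that the set where two $\K$-embeddings agree is closed under the closure operator, which is what can fail for a general $\mu$-AEC admitting intersections and what separates the polypresentable case from the multipresentable one.
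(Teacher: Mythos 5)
Your proposal is correct and follows essentially the same route as the paper's proof: $\mu$-accessibility via Lemma~\ref{inter-acc-lem}, the limit characterizations of Fact~\ref{loc-limit-charact}, wide pullbacks realized as intersections, and equalizers obtained from pseudo-universality via exactly the same agreement-set argument (with the decomposition of connected limits into wide pullbacks and equalizers). The only difference is that you spell out the wide-pullback construction and the coherence bookkeeping that the paper dismisses as ``easy to check,'' which is a faithful and correct elaboration rather than a new approach.
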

\begin{proof} 
  By Lemma \ref{inter-acc-lem}, $\K$ is $\mu$-accessible. It is easy to check that having wide pullbacks in a $\mu$-AEC is the same as admitting intersections. Therefore $\K$ has wide pullbacks and hence by Fact \ref{loc-limit-charact} is locally $\mu$-polypresentable.

  Assume now that $\K$ is pseudo-universal. We check that $\K$ has connected limits, which is enough by Fact \ref{loc-limit-charact}. To see that $\K$ has connected limits, it suffices to check that $\K$ has wide pullbacks and equalizers (this is similar to the proof that having arbitrary limits is the same as having products and equalizers \cite[12.3]{joy-of-cats}). We have already checked that $\K$ has wide pullbacks. As for equalizers this is implied by the definition of a pseudo-universal class: assume we have two $\K$-embeddings $f, g: M \rightarrow N$. Let $A := \{x \in U M \mid f (x) = g (x)\}$. By definition of $A$, $f \rest A = g \rest A$. By the definining condition of pseudo-universality, $f \rest \ccl^M (A) = g \rest \ccl^M (A)$. Therefore $A = \ccl^M (A)$, so $A \lea M$. Therefore $h := f \rest A$ is a $\K$-embedding and hence the equalizer of $f$ and $g$.
\end{proof}

We obtain the following characterization of $\mu$-AECs admitting intersections:

\begin{thm}\label{admit-inter-charact}
  Let $\ck$ be a category and let $\mu$ be a regular cardinal. The following are equivalent:

  \begin{enumerate}
  \item $\ck$ is locally $\mu$-polypresentable and all its morphisms are monomorphisms.
  \item $\ck$ is equivalent to a $\mu$-AEC which admits intersections.
  \end{enumerate}
\end{thm}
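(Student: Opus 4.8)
The plan is to prove the two directions of the equivalence separately, with the forward direction (2)$\Rightarrow$(1) being essentially immediate from the machinery already assembled, and the reverse direction (1)$\Rightarrow$(2) being where the real work lies.

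For (2)$\Rightarrow$(1), suppose $\ck$ is equivalent to a $\mu$-AEC $\K$ admitting intersections. By Lemma~\ref{mu-aec-categ}, $\K$ is locally $\mu$-polypresentable, and this property is preserved under equivalence of categories, so $\ck$ is locally $\mu$-polypresentable. Moreover, the morphisms of a $\mu$-AEC are $\K$-embeddings, which are in particular injective homomorphisms and hence monomorphisms; again this passes across the equivalence. So this direction is short.

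For (1)$\Rightarrow$(2), the key tool is Fact~\ref{mu-aec-acc}. Since $\ck$ is locally $\mu$-polypresentable it is in particular $\mu$-accessible, and by hypothesis all its morphisms are monomorphisms; so by the converse part of Fact~\ref{mu-aec-acc}, $\ck$ is equivalent to some $\mu$-AEC $\K$. What remains is to show that $\K$ can be taken to admit intersections. The strategy here is to transport the wide-pullback structure: by Fact~\ref{loc-limit-charact}(1), local $\mu$-polypresentability of the $\mu$-accessible $\ck$ is equivalent to $\ck$ having wide pullbacks, and wide pullbacks are a limit notion preserved by equivalence, so $\K$ itself has wide pullbacks. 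Then I would invoke the observation recorded in the proof of Lemma~\ref{mu-aec-categ}—that, in a $\mu$-AEC, having wide pullbacks is the same as admitting intersections—to conclude that $\K$ admits intersections. This establishes (2).

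I expect the main obstacle to be the identification, in a $\mu$-AEC, of the category-theoretic wide pullback with the model-theoretic intersection $\ccl^N(A)$. The subtlety is that a wide pullback of a family $\{M_i \lea N\}$ in the category $\K$ must be shown to have as its underlying structure precisely $\bigcap_i U M_i$, equipped with the substructure relation to each $M_i$, and one must verify that this intersection is indeed a $\K$-substructure of $N$ (which is exactly the defining condition for admitting intersections) and satisfies the relevant universal property among $\K$-embeddings. Since this equivalence is asserted as ``easy to check'' in Lemma~\ref{mu-aec-categ}, I would lean on it directly rather than re-deriving it; the only care needed is to confirm that wide pullbacks transfer cleanly across the equivalence $\ck \simeq \K$, which follows because limits are preserved and reflected by equivalences of categories.
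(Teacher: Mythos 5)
Your proposal is correct and follows essentially the same route as the paper's own proof: Lemma~\ref{mu-aec-categ} for (2)$\Rightarrow$(1), and Fact~\ref{mu-aec-acc} plus transfer of wide pullbacks across the equivalence (via Fact~\ref{loc-limit-charact}) together with the wide-pullbacks-equals-intersections observation for (1)$\Rightarrow$(2). Your explicit remarks on preservation of monomorphisms and limits under equivalence, and on the identification of the wide pullback with $\ccl^N(A)$, merely spell out steps the paper leaves implicit.
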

\begin{proof}
  If $\ck$ is equivalent to a $\mu$-AEC which admits intersections, then by Lemma \ref{mu-aec-categ}, $\ck$ is locally $\mu$-polypresentable. Conversely, assume that $\ck$ is locally $\mu$-polypresentable and all its morphisms are mono. By Fact \ref{mu-aec-acc}, $\ck$ is equivalent to a $\mu$-AEC $\K$. Since $\ck$ has wide pullbacks, $\K$ has wide pullbacks, hence $\K$ must admit intersections.
\end{proof}

For locally $\mu$-multipresentable categories, we give a syntactic description. Johnstone \cite{johnstone-multipres} has shown (for $\mu = \aleph_0$) that they can be axiomatized by disjunctive theories (see the proof below), and we can then Skolemize to obtain a universal class.

\begin{lem}\label{char-lem} If $\ck$ is a locally $\mu$-multipresentable category with all morphisms monomorphisms, then $\ck$ is axiomatizable by a universal $\Ll_{\infty, \mu}$-theory.
\end{lem}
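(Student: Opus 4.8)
The plan is to prove Lemma~\ref{char-lem} in two stages, following the hint: first axiomatize $\ck$ by a \emph{disjunctive} (geometric-style) $\Ll_{\infty, \mu}$-theory using Johnstone's characterization of locally multipresentable categories, and then Skolemize the disjunctions away to obtain a genuinely universal theory. Throughout I would exploit the prior results of the excerpt to move freely between category and syntax: by Fact~\ref{mu-aec-acc} and Lemma~\ref{mod-emb}, a $\mu$-accessible category with all morphisms mono is equivalent to $\Emb(\phi)$ for some basic $\Ll_{\infty,\mu}$-sentence, so I may assume $\ck$ is presented as a class of $\tau$-structures with substructure embeddings, and the content to be extracted is \emph{why} the multipresentability hypothesis forces the axioms to take universal form.

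First I would recall Johnstone's theorem (\cite{johnstone-multipres}, generalized from $\aleph_0$ to $\mu$ by replacing finitary formulas throughout by $\Ll_{\infty,\mu}$-formulas): a $\mu$-accessible category is locally $\mu$-multipresentable precisely when it is axiomatizable by a \emph{disjunctive} theory, i.e.\ a set of sentences of the form $\forall \bx\, \bigl(\phi(\bx) \rightarrow \bigvee_{i \in I} \exists \by_i\, \psi_i(\bx, \by_i)\bigr)$ in which the disjuncts on the right are, in the relevant models, \emph{mutually exclusive and the witnessing tuple is essentially unique}. The existence of connected limits (Fact~\ref{loc-limit-charact}(2))---equivalently of the multicolimit/multiinitial structure---is exactly what guarantees this uniqueness-of-witnesses: a multiinitial family gives, for each antecedent-satisfying tuple, a canonical choice among the disjuncts together with a canonical witness, so the disjunctive axioms behave functionally. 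I would state this as the key input and indicate that its proof is a routine transcription of Johnstone's finitary argument into the infinitary setting.

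Next comes the Skolemization step, which is where the real work lies and which I expect to be the main obstacle. Given a disjunctive axiom $\forall \bx\,(\phi(\bx) \rightarrow \bigvee_{i} \exists \by_i\, \psi_i(\bx,\by_i))$, the essential uniqueness of the witnessing branch and tuple means that on each model the choice of $i$ and of $\by_i$ is a \emph{well-defined function} of $\bx$. I would therefore expand $\tau$ by new function symbols $f_{i}$ (one family per disjunct, of the appropriate $(<\mu)$-arity) and replace the axiom by the universal sentences asserting, for the canonically selected branch, that $\psi_i(\bx, f_i(\bx))$ holds, together with universal sentences ruling out the other branches; the mutual exclusivity from the disjunctive presentation is exactly what makes these universal constraints consistent and forces the reduct to satisfy the original disjunction. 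The delicate point is that Skolemizing must not alter the category up to equivalence: I must check that the expanded structures form a functorial expansion (in the sense of the excerpt, cf.\ the proof of Theorem~\ref{pseudo-univ-equiv}), so that reduct induces an equivalence between the expanded class and $\ck$, and that substructure embeddings in the expanded language correspond correctly to the morphisms of $\ck$. Because the witness functions are \emph{canonical}—pinned down by the essential uniqueness in the multiinitial family—any embedding automatically commutes with them, which is precisely the closure-coding phenomenon used in Theorem~\ref{pseudo-univ-equiv}.

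Finally I would assemble the pieces: the Skolemized theory $\widehat{T}$ is a universal $\Ll_{\infty,\mu}$-theory, its class of models with substructure embeddings is (by the functorial-expansion check) equivalent to $\ck$, and so $\ck$ is axiomatizable by a universal $\Ll_{\infty,\mu}$-theory as claimed. I would note in passing that this lemma sets up the forward direction of the desired Theorem~\ref{char} characterizing universal $\mu$-AECs, since a universal theory yields a universal class via Tarski's presentation theorem (Fact~\ref{tarski-pres}). The two anticipated friction points are (i) verifying that Johnstone's disjunctive-axiomatization theorem transfers cleanly to $\Ll_{\infty,\mu}$, where one must be careful that the arities and the size of the disjunction sets stay within the $\mu$-ary regime, and (ii) confirming that the Skolem functions can be chosen \emph{coherently across the whole category}—i.e.\ naturally in the object—rather than model-by-model, which is what the uniqueness clause in the polyinitial/multiinitial definitions ultimately buys us.
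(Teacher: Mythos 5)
Your proposal follows essentially the same route as the paper's proof: a disjunctive axiomatization in the style of Johnstone (which the paper obtains for general $\mu$ via the canonical embedding $E:\ck\to\Set^{\ca^{\op}}$, realizing $\ck$ as a $\mu$-cone-orthogonality class in a variety of many-sorted unary algebras), followed by Skolemization justified by the essential uniqueness and mutual exclusivity of the witnesses. The only cosmetic difference is in the Skolemization device: the paper eliminates $\exists!$ by introducing a single auxiliary relation symbol $R(\bx,\by)$ coding $\bigvee_{i}\psi_i(\bx,\by)$ and then adding Skolem functions, which is tidier than your per-disjunct functions $f_i$ with branch-exclusion axioms, since it avoids having to assign junk values to $f_j$ on tuples whose unique witness lies in a different branch.
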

\begin{proof}
Consider the canonical embedding $E:\ck\to\Set^{\ca^{\op}}$ where $\ca$ is the representative full subcategory of $\mu$-presentable objects in $\ck$. Then $E$ preserves $\mu$-directed colimits and $\mu$-presentable objects. Following \cite[4.30]{adamek-rosicky}, $\ck$ is equivalent to a $\mu$-cone-orthogonality class in $\Set^{\ca^{\op}}$ and the latter category can be viewed as an equational variety of many-sorted universal algebras. Thus $\ck$ can be axiomatized by a disjunctive theory in the corresponding unary vocabulary $\tau$ (see \cite[Exercise 5.f]{adamek-rosicky}). Recall that this theory consists of sentences
$$
(\forall \bx)(\varphi(\bx)\to(\exists !\by) \bigvee\limits_{i\in I}\psi_i(\bx,\by))\wedge(\forall \bx,\by)\neg\bigvee\limits_{i\neq j\in I}(\psi_i(\bx,\by)\wedge\psi_j(\bx,\by)
$$
where $\varphi$ and $\psi_i$ are conjunctions of atomic formulas and $\bx,\by$ are strings of variables. The use of $\exists !$ can be eliminated by introducing a new relation symbol $R (\bx,\by)$ such that $R(\ba,\bb)$ if and only if 
$\bigvee\limits_{i\in I}\psi_i(\ba,\bb)$ and then adding Skolem functions. Thus $\ck$ can be axiomatized by a universal $\Ll_{\infty, \mu}$-theory.
\end{proof}

We obtain the following characterization of universal classes:

\begin{thm}\label{char}
  Let $\ck$ be a category and let $\mu$ be a regular cardinal. The following are equivalent:

  \begin{enumerate}
  \item\label{char-1} $\ck$ is locally $\mu$-multipresentable and all its morphisms are monomorphisms.
  \item\label{char-2} $\ck$ is equivalent to $\Emb (T)$, for some universal $\Ll_{\infty, \mu}$-theory $T$.
  \item\label{char-3} $\ck$ is equivalent to a universal $\mu$-AEC.
  \item\label{char-4} $\ck$ is equivalent to a pseudo-universal $\mu$-AEC.
  \end{enumerate}
\end{thm}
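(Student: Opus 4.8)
The plan is to prove the cyclic chain of implications (\ref{char-1}) $\Rightarrow$ (\ref{char-2}) $\Rightarrow$ (\ref{char-3}) $\Rightarrow$ (\ref{char-4}) $\Rightarrow$ (\ref{char-1}), so that all four conditions become equivalent. Three of the four implications are short applications of results already in hand, and essentially all of the work is concentrated in (\ref{char-1}) $\Rightarrow$ (\ref{char-2}).

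For (\ref{char-2}) $\Rightarrow$ (\ref{char-3}): if $T$ is a universal $\Ll_{\infty, \mu}$-theory, then Tarski's presentation theorem (Fact \ref{tarski-pres}, (\ref{tarski-2}) $\Rightarrow$ (\ref{tarski-3})) says the class $K$ of models of $T$ is a $\mu$-universal class. The category $\Emb(T)$ is precisely $(K, \subseteq)$ equipped with its substructure embeddings, which is a universal $\mu$-AEC; composing equivalences gives (\ref{char-3}). For (\ref{char-3}) $\Rightarrow$ (\ref{char-4}): every universal $\mu$-AEC is pseudo-universal by Remark \ref{univ-pseudo-univ}, so the same witnessing $\mu$-AEC serves. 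For (\ref{char-4}) $\Rightarrow$ (\ref{char-1}): a pseudo-universal $\mu$-AEC is locally $\mu$-multipresentable by Lemma \ref{mu-aec-categ}, and all of its morphisms are monomorphisms (a $\mu$-AEC is an iso-full subcategory of structures with injective homomorphisms); both properties are invariant under equivalence of categories, so they transfer to $\ck$, giving (\ref{char-1}).

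The heart of the argument is (\ref{char-1}) $\Rightarrow$ (\ref{char-2}). I would begin with Lemma \ref{char-lem}, which already yields a universal $\Ll_{\infty, \mu}$-theory $T$ with $\ck \simeq \Mod(T)$; the remaining task is to pass from the category $\Mod(T)$ of \emph{homomorphisms} to the category $\Emb(T)$ of \emph{substructure embeddings} without destroying universality. The key observation is that the theory produced in the proof of Lemma \ref{char-lem} can be arranged to live in a purely \emph{functional} vocabulary: the underlying category is an equational variety of many-sorted algebras, and the auxiliary relation symbol $R$ is quantifier-free definable via the disjunction $\bigvee_i \psi_i$, so it can be folded back into the universal axioms along with its Skolem function rather than kept as a primitive relation. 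In a functional vocabulary there are no relation symbols to reflect, so a homomorphism is a substructure embedding exactly when it is injective. But condition (\ref{char-1}) forces every morphism of $\ck$, hence of $\Mod(T)$, to be a monomorphism, and monomorphisms in a category of functional structures with homomorphisms are precisely the injective homomorphisms. Therefore every morphism of $\Mod(T)$ is already a substructure embedding, so $\Mod(T)$ and $\Emb(T)$ have the same objects and the same morphisms, i.e.\ $\Mod(T) = \Emb(T)$, and we conclude $\ck \simeq \Emb(T)$ with $T$ universal.

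I expect the main obstacle to be exactly this $\Mod$-versus-$\Emb$ passage. Lemma \ref{mod-emb} does convert $\Mod$ into $\Emb$ for arbitrary basic sentences, but the device it uses---coding each relation as a new sort of tuples---introduces existential quantifiers and so need not preserve the universal form of the axioms. Keeping the vocabulary functional is precisely what lets me sidestep that coding and retain universality, and verifying that the construction behind Lemma \ref{char-lem} can genuinely be carried out functionally (the Skolemization of the $\exists!$ together with the eliminability of $R$) is the step that will demand the most care. Everything else reduces to transporting the properties ``locally $\mu$-multipresentable'' and ``all morphisms monomorphisms'' across equivalences and to a direct appeal to Tarski's theorem.
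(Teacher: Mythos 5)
Your global architecture is exactly the paper's: the same cycle (\ref{char-1}) $\Rightarrow$ (\ref{char-2}) $\Rightarrow$ (\ref{char-3}) $\Rightarrow$ (\ref{char-4}) $\Rightarrow$ (\ref{char-1}), with Fact \ref{tarski-pres}, Remark \ref{univ-pseudo-univ} and Lemma \ref{mu-aec-categ} disposing of the last three implications, and those parts of your write-up are fine. The divergence is in (\ref{char-1}) $\Rightarrow$ (\ref{char-2}), where the paper simply combines Lemma \ref{char-lem} with Lemma \ref{mod-emb}, while you replace Lemma \ref{mod-emb} by a direct argument. Your critique of the sort-coding in Lemma \ref{mod-emb} (that it introduces existential content and so does not obviously preserve universality) is fair---the paper is terse on exactly this point---but your replacement rests on a false claim: that monomorphisms in a category of structures over a purely functional vocabulary, with homomorphisms, are precisely the injective homomorphisms. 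That is true in the category of \emph{all} $\tau$-structures, because the term algebra on one generator represents the underlying-set functor; it fails in a full subcategory $\Mod(T)$, which need not contain that term algebra, even for $T$ universal. Concretely, let $\tau$ consist of two constants $c,d$ and let $T = \{\forall x\, (x = c \vee x = d)\}$, a universal theory in a functional vocabulary. Every element of every model is named by a constant, so each hom-set of $\Mod(T)$ has at most one element and \emph{every} morphism is vacuously a monomorphism; yet the unique homomorphism from the two-element model to the one-element model is not injective. Here $\Mod(T)$ is a two-object chain while $\Emb(T)$ is discrete, so they are not even equivalent. Thus ``all morphisms of $\Mod(T)$ are mono'' plus functionality of the vocabulary does not yield $\Mod(T) = \Emb(T)$, and your proof of the key implication has a genuine gap.

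The missing ingredient is the hypothesis you never use at that point: local $\mu$-multipresentability itself. By Fact \ref{loc-limit-charact}, $\ck$ has connected limits, and the cone-orthogonality class $\cl \subseteq \Set^{\ca^{\op}}$ appearing in the proof of Lemma \ref{char-lem} is closed under connected limits---in particular under kernel pairs---computed in the presheaf category. So for any morphism $f \colon M \to N$ of $\cl$, the kernel pair $M \times_N M$ lies in $\cl$; since $f$ is a monomorphism, its two projections coincide, which forces $f$ to be pointwise injective. Injectivity of all homomorphisms between models thus comes from equalizers and wide pullbacks, not from the shape of the signature (note that my counterexample above is not locally $\aleph_0$-multipresentable \emph{as a concrete category of models in the relevant sense}: its kernel pairs escape the class). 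With injectivity secured, your idea of folding the relation symbol $R$ back into its quantifier-free definition can be carried through; but note one further subtlety you should address: after Skolemizing the $\exists!$, arbitrary homomorphisms need not preserve the Skolem functions at tuples where the guard $\varphi$ fails, whereas embeddings do, since injective maps in a functional vocabulary reflect quantifier-free formulas and hence preserve and reflect $\varphi$. So the statement one actually proves is $\ck \simeq \Emb(T)$ (the form required by (\ref{char-2})), not the literal identity $\Mod(T) = \Emb(T)$; it is precisely here that $\Emb$ is better behaved than $\Mod$, which is the point of the paper's detour through Lemma \ref{mod-emb}.
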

\begin{proof}
  (\ref{char-1}) implies (\ref{char-2}) is Lemma \ref{char-lem} combined with Lemma \ref{mod-emb}. (\ref{char-2}) implies (\ref{char-3}) is Fact \ref{tarski-pres}. (\ref{char-3}) implies (\ref{char-4}) is Remark \ref{univ-pseudo-univ}. Finally, (\ref{char-4}) implies (\ref{char-1}) is Lemma \ref{mu-aec-categ}.
\end{proof}

Note that a syntactic characterization of locally polypresentable classes (in terms of models of a pullback theory) is also known \cite{hebert} (see also Rabin's characterization of first-order theories with intersections \cite{rabin-inter}).

\subsection{Summary}
We have the following hierarchy of categories whose morphisms are monomorphisms, for $\mu$ a fixed regular cardinal:

\begin{enumerate}
\item Locally $\mu$-multipresentable.
\item Locally $\mu$-polypresentable.
\item Accessible.
\end{enumerate}

Each level is properly contained in the next and each level admits a characterization in terms of abstract classes (see Theorem \ref{char}, Theorem \ref{admit-inter-charact}, and Fact \ref{mu-aec-acc}):

\begin{enumerate}
\item Universal $\mu$-AEC.
\item $\mu$-AEC admitting intersections.
\item $\mu'$-AEC, for some $\mu'$.
\end{enumerate}

Each level also has a known syntactic characterization (see Theorem \ref{char}, \cite{hebert}, and Fact \ref{lair-precise}):

\begin{enumerate}
\item $\Emb (T)$, for $T$ a universal $\Ll_{\infty, \mu}$-theory.
\item $\Emb (T)$, for $T$ a pullback $\Ll_{\infty, \mu}$-theory.
\item $\Emb (T)$, for $T$ a basic $\Ll_{\infty, \infty}$-theory.
\end{enumerate}

We do not know where $\mu$-AECs (for fixed $\mu$) stand in this hierarchy:

\begin{question}\label{acc-mu-aec-q}
  Is every accessible category with $\mu$-directed colimits and all morphisms monomorphisms equivalent to a $\mu$-AEC?
\end{question}

\section{More on Shelah's presentation theorem}

Shelah's presentation theorem \cite[I.1.9]{shelahaecbook} says that any AEC is the reduct of a class of models of an $\Ll_{\omega, \omega}$ theory omitting a set of types. Moreover the reduct map is functorial. The proof generalizes to $\mu$-AECs \cite[3.2]{mu-aec-jpaa}. In \cite[2.5]{ct-accessible-jsl}, the following category-theoretic analog was proven (for $\mu = \aleph_0$, but the proof generalizes).

\begin{defin}
  A functor is called \emph{essentially surjective} if every object of its codomain is isomorphic to an object in its range.
\end{defin}

\begin{fact}\label{lr-pres}
  Let $\ck$ be an accessible category with $\mu$-directed colimits and all morphisms monomorphisms. Then there is a $\mu$-accessible category $\cl$ whose morphisms are monomorphisms and an essentially surjective faithful functor $F: \cl \rightarrow \ck$ preserving $\mu$-directed colimits.
\end{fact}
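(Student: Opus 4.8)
The plan is to realize $\ck$ as the essential image of a free $\mu$-ind-completion, a clean categorical analogue of the ``naming'' idea behind Shelah's presentation theorem. Since $\ck$ is accessible, it is $\lambda$-accessible for arbitrarily large regular $\lambda$; I would fix one with $\lambda \ge \mu$. Let $\ca$ be a small full subcategory of $\ck$ representative among the $\lambda$-presentable objects, and set $\cl := \Ind_\mu(\ca)$, the free completion of $\ca$ under $\mu$-filtered colimits. This $\cl$ is automatically $\mu$-accessible (its $\mu$-presentable objects are, up to retracts, the objects of $\ca$). Because $\ck$ has $\mu$-directed colimits, the inclusion $\ca \hookrightarrow \ck$ extends, by the universal property of $\Ind_\mu(\ca)$, to an essentially unique functor $F : \cl \to \ck$ preserving $\mu$-directed colimits; colimit preservation is thus built into $F$ from the start.

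For essential surjectivity I would use that $\ck$ is $\lambda$-accessible with $\lambda \ge \mu$: every $M \in \ck$ is a $\lambda$-filtered, hence $\mu$-filtered, colimit of objects of $\ca$. Reading that diagram as a formal colimit in $\cl = \Ind_\mu(\ca)$ produces an object $\widetilde M$, and since $F$ preserves $\mu$-filtered colimits and restricts to the inclusion on $\ca$, we get $F(\widetilde M) \cong M$. So only faithfulness remains, and this is exactly where the hypothesis that all morphisms of $\ck$ are monomorphisms enters; I expect it to be the one genuinely nonformal step.

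For faithfulness, write objects of $\cl$ as formal $\mu$-filtered colimits $X = \colim_i A_i$ and $Y = \colim_j B_j$ of objects of $\ca$, and suppose $F(f) = F(g)$ for $f, g : X \to Y$. Since each $A_i$ is $\mu$-presentable in $\cl$ and the coprojections $A_i \to X$ are jointly epimorphic, it suffices to show $f$ and $g$ agree after precomposition with each $A_i \to X$. Such a restriction factors through some stage of the diagram for $Y$, and after passing to a common index $j$ in the $\mu$-filtered diagram we may write both restrictions through the same $B_j$, say via $u, v : A_i \to B_j$. Applying $F$ and using $F(f) = F(g)$ gives $\beta_j u = \beta_j v$ in $\ck$, where $\beta_j : B_j \to F(Y)$ is the colimit coprojection. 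But $\beta_j$ is a morphism of $\ck$, hence a monomorphism, so $u = v$, whence $f$ and $g$ have equal restriction to $A_i$; as $i$ was arbitrary, $f = g$. Finally, since faithful functors reflect monomorphisms and every morphism of $\ck$ is mono, every morphism of $\cl$ is a monomorphism as well, completing the verification. The remaining properties are formal consequences of the $\Ind_\mu$ construction, so the essential content is precisely the faithfulness argument and its reliance on coprojections being mono.
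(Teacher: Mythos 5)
Your proposal is correct and is essentially the proof the paper relies on: the statement is quoted from \cite[2.5]{ct-accessible-jsl}, whose argument is exactly your construction---take $\cl$ to be the free $\mu$-filtered-colimit completion $\Ind_\mu(\ca)$ of a representative subcategory $\ca$ of $\lambda$-presentable objects (for suitable $\lambda \ge \mu$), extend the inclusion canonically, get essential surjectivity from $\lambda$-accessibility, and get faithfulness from the fact that the colimit coprojections in $\ck$ are monomorphisms (your stage-wise argument is precisely the injectivity of the comparison map $\colim_j \ca(A_i,B_j) \to \ck(A_i, \colim_j B_j)$). The only cosmetic slip is writing $\beta_j u = \beta_j v$ where you mean $(F\beta_j)(Fu) = (F\beta_j)(Fv)$, which changes nothing since $F$ restricts to the inclusion on $\ca$.
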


Fact \ref{lr-pres} says that accessible categories with $\mu$-directed colimits and all morphisms mono are, while not necessarily $\mu$-accessible, functorial images of $\mu$-accessible categories. This \emph{cannot} be obtained directly from Shelah's presentation theorem, since we do not know whether accessible categories with $\mu$-directed colimits are $\mu$-AECs (the problem is that we are fixing $\mu$, see Question \ref{acc-mu-aec-q}). 

In this section, we combine Fact \ref{lr-pres} with the proof of Shelah's presentation theorem \cite[I.1.9]{shelahaecbook} to obtain the following common generalization:

\begin{thm}\label{pres-thm}
  Let $\ck$ be an accessible category with $\mu$-directed colimits whose morphisms are monomorphisms. Then there is a universal $\mu$-AEC $\cl$ and an essentially surjective faithful functor $F: \cl \rightarrow \ck$ preserving $\mu$-directed colimits.
\end{thm}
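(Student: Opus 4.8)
The plan is to chain together two known reductions. First I would use Fact~\ref{lr-pres} to trade $\ck$ for a $\mu$-accessible category mapping into it, and then apply (the $\mu$-ary version of) Shelah's presentation theorem to trade that $\mu$-accessible category for a universal $\mu$-AEC mapping into it; composing the two functors produces the desired $F$.

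In detail: applying Fact~\ref{lr-pres} to $\ck$ yields a $\mu$-accessible category $\cl_0$ whose morphisms are monomorphisms, together with an essentially surjective, faithful functor $F_0 : \cl_0 \to \ck$ preserving $\mu$-directed colimits. By Fact~\ref{mu-aec-acc}, $\cl_0$ is equivalent to a $\mu$-AEC $\K'$; since an equivalence is faithful, essentially surjective, and preserves all colimits, composing $F_0$ with such an equivalence lets me assume outright that $\cl_0 = \K'$ is a $\mu$-AEC and that $F_0 : \K' \to \ck$ is essentially surjective, faithful, and preserves $\mu$-directed colimits.

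Next I would invoke the generalization of Shelah's presentation theorem to $\mu$-AECs \cite[3.2]{mu-aec-jpaa}, applied to $\K'$. This gives a $\mu$-ary vocabulary $\tau^* \supseteq \tau(\K')$ and a presentation of $\K'$ as a functorial reduct of a class $\cl$ of $\tau^*$-structures, where $\cl$ is the class of models of a universal theory omitting a set $\Gamma$ of quantifier-free $\Ll_{\infty, \mu}$-types. Because universal sentences and omission of quantifier-free types are both inherited by substructures and preserved under unions of $\mu$-directed chains, the class $\cl$, ordered by $\tau^*$-substructure, is closed under substructures with $\lea\, = \subseteq$; hence by Fact~\ref{tarski-pres} it is a universal $\mu$-AEC. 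Let $G : \cl \to \K'$ be the reduct functor $M^* \mapsto M^* \rest \tau(\K')$. The presentation theorem tells us that the reducts of members of $\cl$ are exactly the members of $\K'$, that every member of $\K'$ has an expansion in $\cl$, and that $\tau^*$-substructure embeddings reduce to $\K'$-embeddings; thus $G$ is a well-defined functor that is faithful (morphisms in both categories are determined by their underlying maps), essentially surjective (every $M \in \K'$ has an expansion in $\cl$), and preserves $\mu$-directed colimits (in both classes these are computed as unions, and reducts commute with unions).

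Setting $F := F_0 \circ G : \cl \to \ck$ then completes the argument, since essential surjectivity, faithfulness, and preservation of $\mu$-directed colimits are each stable under composition. I expect the main obstacle to be the verification that the presentation class $\cl$ is genuinely \emph{universal}---i.e.\ closed under all $\tau^*$-substructures rather than merely admitting intersections---which requires checking that Shelah's presentation can be arranged with a universal theory and quantifier-free omitted types. A secondary point demanding care is that the functorial form of the presentation theorem yields an honest reduct functor defined on morphisms (and not merely a surjection of objects) that preserves $\mu$-directed colimits.
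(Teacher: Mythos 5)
Your proposal follows essentially the same route as the paper's proof: Fact \ref{lr-pres} to pass to a $\mu$-accessible category, Fact \ref{mu-aec-acc} to replace it by a $\mu$-AEC, then a universal-class form of Shelah's presentation theorem, with the three functors composed at the end. The one step you correctly flag as ``the main obstacle''---verifying that the presentation can be arranged so that the expanded class is genuinely a \emph{universal} $\mu$-AEC---is precisely what the paper does not leave to the citation \cite[3.2]{mu-aec-jpaa}, but instead proves directly as Lemma \ref{pres-univ}: the vocabulary is expanded by $\alpha$-ary function symbols $f_i^\alpha$ (for $i < \LS(\K)$, $\alpha < \mu$) enumerating a $\mu$-directed system of small $\lea$-substructures, and the class of expansions $M'$ satisfying $\ccl^{M'}(A) \rest \tau \lea M' \rest \tau$ for all $A \subseteq U M'$ is checked to be closed under $\tau'$-substructures and $\mu$-directed unions; your sketch via Tarski's theorem (Fact \ref{tarski-pres}) is a plausible alternative packaging but, as stated, rests on an inspection of the proof of \cite[3.2]{mu-aec-jpaa} that you do not carry out. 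One small point you omit entirely: the presentation lemma requires that the $\mu$-AEC not contain the empty structure (otherwise the nullary Skolem symbols $f_i^0$ cannot be interpreted), which the paper arranges without loss of generality via Remark \ref{pseudo-univ-equiv-rmk}; your route needs the same caveat.
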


Note that Theorem \ref{pres-thm} is indeed a generalization of Fact \ref{lr-pres}, since universal $\mu$-AECs are $\mu$-accessible (Lemma \ref{inter-acc-lem}). Moreover, Theorem \ref{pres-thm} is a generalization of Shelah's presentation theorem: by Tarski's presentation theorem (Fact \ref{tarski-pres}) every universal $\mu$-AEC is a class of models omitting a set of quantifier-free $\Ll_{\infty, \mu}$-types. Theorem \ref{pres-thm} also generalizes Boney's presentation theorem for metric AECs \cite[6.3]{boney-pres-metric-mlq}.

In Shelah's presentation theorem the functor $F$ from Theorem \ref{pres-thm} is the reduct map, as we now establish by imitating Shelah's proof:

\begin{lem}\label{pres-univ}
  Let $\K$ be a $\mu$-AEC which does not contain the empty structure. There exists an expansion $\tau'$ of $\tau := \tau (\K)$ and a universal $\mu$-AEC $\K'$ in the vocabulary $\tau'$ such that the reduct map $\K' \rightarrow \K$ is a faithful functor preserving $\mu$-directed colimits which is surjective on objects. Moreover, $|\tau'| = |\tau| + \LS (\K)$.
\end{lem}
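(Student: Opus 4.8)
The plan is to imitate Shelah's presentation theorem. Given the $\mu$-AEC $\K$ with $\LS(\K) = \lambda$, I would build an expansion $\tau'$ of $\tau$ such that the expanded class $\K'$ is universal (i.e.\ closed under $\tau'$-substructures, with $\leap{\K'} = \subseteq$), while the reduct functor recovers $\K$. The key idea of Shelah's proof is that every $M \in \K$ is, by the LST axiom, covered by its $\lea$-substructures of size $\le \lambda$; we then introduce function symbols that witness, for each small set of elements, a choice of such a small substructure containing them together with its isomorphism type. Concretely, I would fix for each $M \in \K$ and each $\ba \in \fct{<\lambda^+}{M}$ (or each subset of size $\le \lambda$) a $\lea$-substructure $N_{\ba} \lea M$ with $\ran(\ba) \subseteq U N_{\ba}$ and $|U N_{\ba}| \le \lambda$, and enumerate $N_{\ba}$ by Skolem-type functions $F_i(\ba)$, $i < \lambda$, whose values list the universe of $N_{\ba}$. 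The new vocabulary $\tau'$ then consists of $\tau$ together with these $(<\lambda^+)$-ary (or $\lambda$-indexed) function symbols, giving $|\tau'| = |\tau| + \lambda = |\tau| + \LS(\K)$ as required.

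The next step is to verify that with this expansion the class $\K' := \{M' \mid M \in \K\}$ of expanded structures is genuinely universal, i.e.\ closed under $\tau'$-substructures. The point is that if $M' \in \K'$ and $B \subseteq U M'$ is closed under all the functions $F_i$ of $\tau'$, then $B$ is the union of the small substructures $N_{\ba}$ for $\ba$ ranging over the $(<\mu)$- (or small) subsequences of $B$: each such $N_{\ba}$ is a $\lea$-substructure of $M$ of size $\le \lambda$ living inside $B$, and these form a $\mu$-directed system whose union is $B$. By the chain axioms for the $\mu$-AEC $\K$, this union is in $\K$ and is a $\lea$-substructure of $M$, so $B \rest \tau$ carries a member of $\K$ with $B\rest\tau \lea M$, whence $B$ itself is the universe of a member of $\K'$ with $B \leap{\K'} M'$. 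One must check that the closure of $B$ under the $F_i$ is itself correctly expanded, i.e.\ that the function values $F_i(\ba)$ computed in $M'$ agree with those that make the restriction a legitimate member of $\K'$; this is the step requiring the most care, and it is exactly where the coherence axiom of the $\mu$-AEC is used, to ensure $N_{\ba} \lea B\rest\tau$ whenever $N_{\ba} \lea M$ and $N_{\ba} \subseteq B\rest\tau \lea M$.

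I would then confirm the functorial properties. The reduct map $\K' \to \K$ sending $M' \mapsto M' \rest \tau = M$ is surjective on objects by construction. Faithfulness is immediate since a $\K'$-embedding is in particular an injective $\tau'$-homomorphism, so distinct embeddings have distinct underlying maps and hence distinct reducts. Preservation of $\mu$-directed colimits follows because $\mu$-directed colimits in both $\K$ and $\K'$ are computed as unions of chains (by the chain axioms), and the reduct of a union is the union of the reducts; the assumption that $\K$ has no empty structure guarantees the Skolem functions can always be given well-defined values (there is always at least one small substructure to name), paralleling the role of that hypothesis in Theorem~\ref{pseudo-univ-equiv}.

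The main obstacle I expect is the closure-under-substructures verification: making the Skolemization precise enough that the $\tau'$-substructure generated by an arbitrary subset is always built from genuine $\lea$-substructures of size $\le \lambda$ assembled via a $\mu$-directed system, and checking that the induced $\tau$-structure on such a substructure is both in $\K$ and $\lea$-below the ambient structure. This is essentially the content of Shelah's original argument, and the delicate point is coordinating the arities of the new functions (which must accommodate all small tuples) with the LST bound $\lambda = \lambda^{<\mu}$ so that the functions are well-defined and the resulting class is closed under the substructure operation; the coherence and chain axioms of the $\mu$-AEC are what make this go through.
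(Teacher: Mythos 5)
There is a genuine gap, and it is exactly at the point you identified as delicate: the arity of the new function symbols. You take the Skolem functions to act on tuples $\ba \in \fct{<\lambda^+}{M}$, so $\tau'$ contains symbols of arity up to $\lambda = \LS(\K) \ge \mu$. But the lemma requires $\K'$ to be a universal \emph{$\mu$-AEC}, and by definition a $\mu$-universal class lives in a $\mu$-ary vocabulary. With symbols of arity $\ge \mu$ this fails outright, and not just formally: a union of a $\mu$-directed system of $\tau'$-substructures is then not even a $\tau'$-structure, since a $\lambda$-tuple from the union need not lie in any single member of the system. So closure under $\mu$-directed unions and preservation of $\mu$-directed colimits---the entire point of the lemma, and what Theorem \ref{pres-thm} consumes---are lost; your construction only yields a $\lambda^+$-universal class, which is the wrong (weaker) statement. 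The correct generalization of Shelah's finitary Skolemization bounds the arities by the directedness parameter, not by $\LS(\K)$: the paper takes $\tau' = \tau \cup \{f_i^\alpha : i < \LS(\K),\ \alpha < \mu\}$ with each $f_i^\alpha$ of arity $\alpha < \mu$, and for $\ba$ of length $\alpha$ lets the values $f_i^\alpha(\ba)$, $i < \LS(\K)$, enumerate a chosen $M_{\ran(\ba)} \lea M$ of size at most $\LS(\K)$ containing $\ran(\ba)$, taken from a $\mu$-directed system $\{M_s : s \in [M]^{<\mu}\}$. Then the closure of any set $A$ under the $(<\mu)$-ary functions is $\bigcup_{s \in [\ccl^{M'}(A)]^{<\mu}} M_s$, a $\mu$-directed union of $\lea$-substructures, and the chain axioms deliver membership in $\K$ and the relation $\lea M$. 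Your tuples of length up to $\lambda$ are simply not needed: small ($<\mu$) tuples suffice because the LST axiom lets each $s \in [M]^{<\mu}$ be covered by a substructure of size $\le \LS(\K)$.

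A second, smaller defect is your definition $\K' := \{M' \mid M \in \K\}$ as the \emph{image} of the chosen expansions. For closure under $\tau'$-substructures you would then need the induced expansion on a closed subset $B$ to coincide with the canonical expansion chosen for $B \rest \tau$, and since the choices of substructures and enumerations were arbitrary, there is no reason this holds; you flag this step (``correctly expanded'') but do not resolve it, and under your definition it is false in general. The paper sidesteps the issue by defining $\K'$ intrinsically as all $\tau'$-structures $M'$ with $M' \rest \tau \in \K$ and $\ccl^{M'}(A) \rest \tau \lea M' \rest \tau$ for every $A \subseteq U M'$; this property is manifestly inherited by $\tau'$-substructures (closure under the functions is computed identically in a substructure), while the chosen expansions serve only to witness surjectivity on objects. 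Your hereditary verification essentially proves that your expansions satisfy this property, so switching to the intrinsic definition repairs this part of your argument. The remaining points---faithfulness of the reduct, surjectivity on objects, preservation of $\mu$-directed colimits via unions, and the role of excluding the empty structure in making the constants $f_i^0$ well-defined---match the paper's proof once the arity is corrected.
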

\begin{proof}
  Let $\tau' := \tau \cup \{f_i^\alpha : i < \LS (\K), \alpha < \mu\}$, where each $f_i^\alpha$ is a new $\alpha$-ary function symbol. Let:

  $$
  \K' := \{M' \in \Str (\tau') \mid M' \rest \tau \in \K \land \forall A \subseteq U M' . \ccl^{M'} (A) \rest \tau \lea M' \rest \tau\}
  $$

  Here, $\Str (\tau')$ is the class of $\tau'$-structure and $\ccl^{M'} (A)$ denotes the closure of $A$ under the functions of $M'$. It is easy to check that $\K'$ is a $\mu$-universal class and that the reduct map is a faithful functor from $\K'$ to $\K$ preserving $\mu$-directed colimits. We show that it is onto: let $M \in \K$, and pick a $\mu$-directed system $\{M_{s} : s \in [M]^{<\mu}\}$ (where $[M]^{<\mu}$ denotes the set of subsets of $U M$ of cardinality strictly less than $\mu$) such that $|U M_s| \le \LS (\K)$, $s \subseteq U M_s$, and $s \subseteq t$ implies that $M_s \lea M_t$. For each $s \in [M]^{<\mu}$, let $\{c_i^{s} : i < \LS (\K)\}$ be an enumeration (possibly with repetitions) of $U M_{s}$. Finally, for $\ba \in \fct{\alpha}{M}$, $\alpha < \mu$, and $i < \LS (\K)$, define $\left(f_i^{\alpha}\right)^{M'} (\ba) := c_i^{\ran (\ba)}$. This works: check that for any $A \subseteq U M'$, $\ccl^{M'} (A) = \bigcup_{s \in [\ccl^{M'} (A)]^{<\mu}} M_s$ and use the chain axioms of $\mu$-AECs.
\end{proof}

\begin{proof}[Proof of Theorem \ref{pres-thm}]
  By Fact \ref{lr-pres}, there is a $\mu$-accessible category $\ck_0$ (whose morphisms are monomorphisms) and an essentially surjective faithful functor $F_0 : \ck_0 \rightarrow \ck$ preserving $\mu$-directed colimits. By Fact \ref{mu-aec-acc}, $\ck_0$ is equivalent via a functor $G$ to a $\mu$-AEC $\K_0$ ($G$ may not be surjective but it will be essentially surjective) and without loss of generality (see Remark \ref{pseudo-univ-equiv-rmk}) $\K_0$ does not contain the empty structure. By Lemma \ref{pres-univ}, there is a universal $\mu$-AEC $\cl$ such that the reduct functor $F_1 : \cl \rightarrow \K_0$ is a faithful functor preserving $\mu$-directed colimits. Let $F := F_1 G F_0$.
\end{proof}

If we ask for the functor $F$ from Theorem \ref{pres-thm} to be full, then it will be an equivalence of categories, so $\ck$ will be equivalent to a $\mu$-universal class. On the other hand, the reduct functor $F$ from Lemma \ref{pres-univ} is always surjective on morphisms (that is, if $f: A \rightarrow B$, then there exists $\bar{f}: A' \rightarrow B'$ such that $F (\bar{f}) = f$). Consider now the following intermediate weakening of fullness:

\begin{defin}\label{weakly-full-def}
  A functor $F: \cl \rightarrow \ck$ is \emph{pullback-full} if for any sequence $\seq{\bar{f}_i: B_i \rightarrow C \mid i \in I}$ of morphisms in $\cl$ and any sequence $\seq{g_i : A \rightarrow F B_i \mid i \in I}$ of morphisms in $\ck$, if $(F \bar{f}_i) g_i =  (F \bar{f}_j) g_j$ for all $i, j \in I$, then there exists an object $A'$ in $\cl$ and morphisms $\seq{\bar{g}_i: A' \rightarrow B_i \mid i \in I}$ such that $F \bar{g}_i = g_i$ for all $i \in I$.
\end{defin}

When is the functor $F$ from Theorem \ref{pres-thm} pullback-full? The answer yields yet another characterization of $\mu$-AECs admitting intersections:

\begin{thm}\label{pres-thm-char}
  Let $\K$ be a $\mu$-AEC. The following are equivalent:

  \begin{enumerate}
  \item\label{pres-thm-char-1} $\K$ admits intersections.
  \item\label{pres-thm-char-2} There is a universal $\mu$-AEC $\cl$ and an essentially surjective \emph{pullback-full} and faithful functor $F: \cl \rightarrow \K$ preserving $\mu$-directed colimits.
  \item\label{pres-thm-char-3} There is a $\mu$-AEC admitting intersections $\cl$ and an essentially surjective \emph{pullback-full} and faithful functor $F: \cl \rightarrow \K$ preserving $\mu$-directed colimits.
  \end{enumerate}
\end{thm}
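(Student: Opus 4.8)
The plan is to prove the cycle (\ref{pres-thm-char-1}) $\Rightarrow$ (\ref{pres-thm-char-2}) $\Rightarrow$ (\ref{pres-thm-char-3}) $\Rightarrow$ (\ref{pres-thm-char-1}). The implication (\ref{pres-thm-char-2}) $\Rightarrow$ (\ref{pres-thm-char-3}) is immediate: a universal $\mu$-AEC admits intersections, so the very functor witnessing (\ref{pres-thm-char-2}) also witnesses (\ref{pres-thm-char-3}).

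For (\ref{pres-thm-char-1}) $\Rightarrow$ (\ref{pres-thm-char-2}), I would first reduce (via Remark \ref{pseudo-univ-equiv-rmk}, exactly as in the proof of Theorem \ref{pres-thm}) to the case where $\K$ does not contain the empty structure; then $\ccl^M (\emptyset) \neq \emptyset$ for every $M \in \K$. The main subtlety is that the naive Skolemized cover of Lemma \ref{pres-univ} is \emph{not} pullback-full, because its $\tau'$-closure can strictly overshoot the intersection-closure $\ccl$. I would therefore refine it: set $\tau' := \tau \cup \{f_i^\alpha : i < \LS (\K),\ \alpha < \mu\}$ with each $f_i^\alpha$ an $\alpha$-ary function symbol, and let
$$
\cl := \{M' \in \Str (\tau') \mid M' \rest \tau \in \K \text{ and } \ccl^{M'} (A) \rest \tau = \ccl^{M' \rest \tau} (A) \text{ for all } A \subseteq U M'\},
$$
ordered by $\tau'$-substructure. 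Using that $|\ccl^M (\ba)| \le \LS (\K)$ for $\ba \in \fct{<\mu}{M}$ (from the LST axiom and the local character of Fact \ref{cl-props}), the reduct $F$ is surjective on objects: expand each $M$ by letting $\{(f_i^\alpha)^{M'} (\ba) : i < \LS (\K)\}$ enumerate $\ccl^M (\ba)$. The defining condition forces $(f_i^\alpha)^{M'} (\ba) \in \ccl^{M' \rest \tau} (\ba)$, whence (by Fact \ref{cl-props}) a subset is a $\tau'$-substructure of $M'$ exactly when its $\tau$-reduct is a $\lea$-substructure of $M' \rest \tau$. This makes $\cl$ closed under $\tau'$-substructures (hence a universal $\mu$-AEC) and makes $F$ faithful and $\mu$-directed-colimit-preserving.

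The payoff is that now $\ccl^{M'} (A) \rest \tau = \ccl^{M' \rest \tau} (A)$ for \emph{every} $M' \in \cl$, so a $\K$-embedding has $\tau'$-closed image. Given $\seq{\bar f_i : B_i \to C}$ in $\cl$ and $\seq{g_i : A \to F B_i}$ in $\K$ with common composite $h := (F \bar f_i) g_i : A \to C \rest \tau$, I would check that each $g_i[A]$ is a $\tau'$-substructure of $B_i$ (its $\tau'$-closure has $\tau$-reduct $\ccl^{B_i \rest \tau} (g_i[A]) = g_i[A]$), and that $\bar f_i$ restricts to a $\tau'$-isomorphism $g_i[A] \cong \ccl^C (h[A])$ (the latter has $\tau$-reduct $h[A]$ since $h[A] \lea C \rest \tau$). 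Because $h$ is common to all $i$, pulling the $\tau'$-structure of $\ccl^C (h[A]) \leap{\cl} C$ back along $h : A \cong h[A]$ yields a \emph{single} object $A' \in \cl$ (isomorphic to the $\tau'$-substructure $\ccl^C (h[A])$ of $C$) with $A' \rest \tau = A$, and the composites $A' \xrightarrow{\cong} g_i[A] \hookrightarrow B_i$ are $\cl$-embeddings $\bar g_i$ with $F \bar g_i = g_i$. This is precisely pullback-fullness.

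For (\ref{pres-thm-char-3}) $\Rightarrow$ (\ref{pres-thm-char-1}), I would show $\K$ has (nonempty) wide pullbacks and then invoke the equivalence, for $\mu$-AECs, between having wide pullbacks and admitting intersections (used in Lemma \ref{mu-aec-categ} and Theorem \ref{admit-inter-charact}). Given a diagram $\seq{h_j : D_j \to E \mid j \in J}$, $J \neq \emptyset$: by essential surjectivity pick $\bar E \in \cl$ with $F \bar E \cong E$ (assume equality), and for each $j$ apply pullback-fullness to the singleton family $\id_{\bar E} : \bar E \to \bar E$ together with $h_j : D_j \to E = F \bar E$ to lift $h_j$ to $\bar h_j : \bar D_j \to \bar E$ with $F \bar h_j = h_j$. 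As $\cl$ admits intersections it has wide pullbacks, so form the wide pullback $(\bar P, \seq{\bar p_j})$ of $\seq{\bar h_j}$ in $\cl$; applying $F$ gives a cone $(P, \seq{p_j})$ over $\seq{h_j}$ in $\K$. For universality, take any cone $(Q, \seq{q_j})$ in $\K$: apply pullback-fullness to $\seq{\bar h_j : \bar D_j \to \bar E}$ and $\seq{q_j : Q \to F \bar D_j}$ (the consistency hypothesis is exactly the cone condition $h_j q_j = h_k q_k$) to get $A' \in \cl$ with $F A' = Q$ and lifts $\bar q_j : A' \to \bar D_j$; faithfulness of $F$ upgrades the $\K$-equalities $h_j q_j = h_k q_k$ to $\cl$-equalities $\bar h_j \bar q_j = \bar h_k \bar q_k$, so $(A', \seq{\bar q_j})$ is a cone in $\cl$, which factors uniquely through $\bar P$ via some $\bar u$, and $u := F \bar u$ satisfies $p_j u = q_j$; uniqueness follows since the $p_j$ are monomorphisms. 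Thus $(P, \seq{p_j})$ is the wide pullback, and $\K$ admits intersections. The hard part throughout is the (\ref{pres-thm-char-1}) $\Rightarrow$ (\ref{pres-thm-char-2}) cover: realizing one must impose the \emph{exact} closure condition $\ccl^{M'} (A) \rest \tau = \ccl^{M' \rest \tau} (A)$ rather than the weaker $\lea$-containment of Lemma \ref{pres-univ}, and verifying the resulting class is still a universal $\mu$-AEC with surjective reduct; once the cover computes $\ccl$ on the nose, both pullback-fullness and the converse direction are comparatively formal, resting only on faithfulness, the fact that all morphisms are monomorphisms, and the dictionary between wide pullbacks and intersections.
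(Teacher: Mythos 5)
Your proposal is correct and takes essentially the same route as the paper: the same implication cycle, with (\ref{pres-thm-char-1}) $\Rightarrow$ (\ref{pres-thm-char-2}) proved by the same refined Skolemized cover---your defining condition $\ccl^{M'} (A) \rest \tau = \ccl^{M' \rest \tau} (A)$ is equivalent, given that $\K$ admits intersections, to the paper's $\K''$ (members of $\K'$ in which every $M_0 \lea M' \rest \tau$ is closed under the Skolem functions), and your pullback-fullness verification is the paper's computation with the function symbols. Your (\ref{pres-thm-char-3}) $\Rightarrow$ (\ref{pres-thm-char-1}) likewise matches the paper's argument (singleton-instance lifting, transfer of wide pullbacks along $F$ using faithfulness and monomorphisms), merely spelling out the step the paper calls routine, and your explicit reduction to classes without the empty structure via Remark \ref{pseudo-univ-equiv-rmk} is a point the paper handles the same way through Lemma \ref{pres-univ}.
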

\begin{proof} \
  \begin{itemize}
  \item \underline{(\ref{pres-thm-char-2}) implies (\ref{pres-thm-char-3})}: Trivial.
  \item \underline{(\ref{pres-thm-char-3}) implies (\ref{pres-thm-char-1})}:
    By Fact \ref{loc-limit-charact}, it suffices to check that $\K$ has wide pullbacks. Observe that the definition of pullback-fullness (taken with $I$ a singleton set and $\bar{f}_i = \id_{C}$) implies that whenever $g : A \rightarrow F C$, there exists $\bar{g} : A' \rightarrow C$ such that $F \bar{g} = g$. The proof is now routine using the fact that $\cl$ has wide pullbacks, together with the pullback-fullness and faithfulness of $F$.
  \item \underline{(\ref{pres-thm-char-1}) implies (\ref{pres-thm-char-2})}:
    In the proof of Lemma \ref{pres-univ}, let $\K''$ be the class of members $M'$ of $\K'$ such that $\ccl^{M'} (M_0) = M_0$ for all $M_0 \lea M' \rest \tau$. The reduct map from $\K''$ to $\K$ is a faithful functor from $\K''$ to $\K$ preserving $\mu$-directed colimits. As in the proof of \cite[2.11]{ap-universal-apal}, this functor is onto, and we show that it is also pullback-full.

    Let $A, C, \seq{B_i, \bar{f}_i, g_i \mid i \in I}$ be as in the definition of being pullback-full. For $i \in I$, let $h_i := (F \bar{f}_i) g_i$. By hypothesis, $h_i = h_j$ for all $i, j \in I$, so let $h := h_i$. First observe that $h [U A]$ induces a substructure $C_0$ of $C$, because $h[A] \lea C \rest \tau = F C$ and so by definition of $\K''$, $\ccl^{C} (h [U A]) = h[U A]$. We can find a $\tau'$-expansion $A'$ of $A$ such that $h$ induces a $\tau'$-isomorphism from $A'$ onto $C_0$. Thus $h$ will induce a $\K''$-embedding $\bar{h}$ from $A'$ into $C$.

    We now claim that for all $i \in I$, $g_i$ induces a $\K''$-embedding $\bar{g}_i$ from $A'$ into $B_i$. This is enough to prove what we want. To prove the claim, it is enough to see that for any $\alpha$-ary $\tau'$-function symbol $\rho$ and any $\ba \in \fct{\alpha}{A'}$, $g_i (\rho^{A'} (\ba)) = \rho^{B_i} (g_i (\ba))$. Now by the previous paragraph $h (\rho^{A'} (\ba)) = \rho^{C} (h (\ba))$. Since $C_0 = h[U A] = h[U A']$, $\rho^{C} (h (\ba)) = \rho^{C'} (h (\ba)) = \rho^{\bar{f}_i[B_i]} (h (\ba))$. By definition of $h$, the latter is equal to $\rho^{\bar{f}_i[B_i]} (\bar{f}_i g_i (\ba)) = \bar{f}_i (\rho^{B_i} (g_i (\ba))$. Putting these calculations together, we have obtained the equality $h (\rho^{A'} (\ba)) = \bar{f}_i (\rho^{B_i} (g_i (\ba)))$. Since (as a concrete function) $h = \bar{f}_i g_i$ and $\bar{f}_i$ is a monomorphism, we conclude that $g_i (\rho^{A'} (\ba)) = \rho^{B_i} (g_i (\ba))$, as desired.
  \end{itemize}
\end{proof}

Note that (\ref{pres-thm-char-3}) implies (\ref{pres-thm-char-1}) holds more generally when we start with an accessible category with $\mu$-directed colimits. We do not know whether (\ref{pres-thm-char-1}) implies (\ref{pres-thm-char-2}) also holds at that level of generality.

\bibliographystyle{amsalpha}
\bibliography{univ}

\end{document}